\DeclareFontFamily{U}{mathx}{\hyphenchar\font45}
\DeclareFontShape{U}{mathx}{m}{n}{
      <5> <6> <7> <8> <9> <10>
      <10.95> <12> <14.4> <17.28> <20.74> <24.88>
      mathx10
      }{}
\DeclareSymbolFont{mathx}{U}{mathx}{m}{n}
\DeclareMathAccent{\widecheck}{0}{mathx}{"71}
\theoremstyle{plain}
\newtheorem{theorem}{Theorem}
\newtheorem{lemma}[theorem]{Lemma}
\newtheorem{corollary}[theorem]{Corollary}
\theoremstyle{definition}
\newenvironment{case}[1]
  {\innercustomthm}
  {\endinnercustomthm}
\theoremstyle{remark}
\newtheorem{remark}[theorem]{Remark}
\newtheorem{question}[theorem]{Question}
\numberwithin{equation}{section}
\numberwithin{theorem}{section}
\numberwithin{conjecture}{section}
\newcommand{\br}{\overline}
\newcommand{\R}{\mathbb R}
\newcommand{\C}{\mathbb C}
\newcommand{\D}{\mathbb D}
\newcommand{\UHP}{\mathbb H}
\DeclareMathOperator{\dist}{{\mathrm{dist}}}
\DeclareMathOperator{\im}{{\mathrm{Im}}}
\DeclareMathOperator{\loc}{\mathrm{loc}}
\begin{document}
\title[Extension of boundary maps to mappings of finite distortion]{Extension of boundary homeomorphisms to mappings of finite distortion}

\author{Christina Karafyllia}
\author{Dimitrios Ntalampekos}
\address{Institute for Mathematical Sciences, Stony Brook University, Stony Brook, NY 11794, USA.}

\thanks{The second author is partially supported by NSF Grant DMS-2000096.}
\email{christina.karafyllia@stonybrook.edu} 
\email[Corresponding author]{dimitrios.ntalampekos@stonybrook.edu}

\date{\today}
\keywords{quasiconformal mapping, mapping of finite distortion, Beurling--Ahlfors extension, distortion, dilatation, exponentially integrable distortion, David homeomorphism, $BMO$-quasiconformal mapping}
\subjclass[2020]{Primary 30C62, 30C65; Secondary 37F31, 46E35.}

\begin{abstract}
We provide sufficient conditions so that a homeomorphism of the real line or of the circle admits an extension to a mapping of finite distortion in the upper half-plane or the disk, respectively. Moreover, we can ensure that the quasiconformal dilatation of the extension satisfies certain integrability conditions, such as $p$-integrability or exponential integrability. Mappings satisfying the latter integrability condition are also known as David homeomorphisms. Our extension operator is the same as the one used by Beurling and Ahlfors in their celebrated work. We prove an optimal bound for the quasiconformal dilatation of the Beurling--Ahlfors extension of a homeomorphism of the real line, in terms of its symmetric distortion function. More specifically, the quasiconformal dilatation is bounded above by an average of the symmetric distortion function and below by the symmetric distortion function itself. As a consequence, the quasiconformal dilatation of the Beurling--Ahlfors extension of a homeomorphism of the real line is (sub)exponentially integrable, is $p$-integrable, or has a $BMO$ majorant  if and only if the symmetric distortion is (sub)exponentially integrable, is $p$-integrable, or has a $BMO$ majorant, respectively.  These theorems are all new and  reconcile several sufficient extension conditions that have been established in the past.

\end{abstract}

\maketitle

\section{Introduction}

The goal of this work is to provide extension theorems for homeomorphisms of the real line or of the circle whose regularity is beyond the quasisymmetric class (defined below). While quasisymmetric homeomorphisms are suitable for studying self-similar sets, or sets with uniform geometry, they are not sufficient for the study of fractals with non-uniform geometry. Such fractals appear often in the field of Complex Dynamics as Julia sets of non-hyperbolic rational maps. Hence, extension theorems for homeomorphisms beyond the quasisymmetric class provide valuable tools for studying non-hyperbolic dynamical systems. 

Several recent works in the field are based on extensions of homeomorphisms of the circle to David homeomorphisms of the disk (defined below). More specifically, David extensions are useful for turning a hyperbolic dynamical system into a parabolic one, an observation that was originally made by Ha\"{\i}ssinsky. Thus, parabolic systems can be studied in terms of hyperbolic systems, which are much better understood. In some instances these extensions have been constructed ``by hand''; see \cite{Haissinsky:parabolic, PetersenZakeri:Siegel} and also \cite[Chapter 9]{BrannerFagella:surgery}. Later Zakeri \cite{Zakeri:boundary} studied systematically extension problems and provided a useful criterion for David extensions, based on the work of J.\ Chen, Z.\ Chen, and He \cite{ChenChenHe:boundary}. We will discuss these results later in detail. We also cite the recent works \cite{LodgeLyubichMerenkovMukherjee:gaskets, LyubichMerenkovMukherjeeNtalampekos:David}, where David extensions of circle homeomorphisms have been used successfully in the study of geometrically finite rational maps and Kleinian groups.  Our results in this paper are stronger than the existing extension theory. Hence, we expect that they will provide useful tools for further developments in Complex Dynamics and they will broaden the understanding of mappings of finite distortion (defined below). 

Let $h\colon \R \to \R$ be an increasing homeomorphism. For $x\in \R$ and $t>0$ we define the \textit{symmetric distortion function}
$$\rho_h(x,t)= \max \left\{ \frac{|h(x+t)-h(x)|}{|h(x)-h(x-t)|}, \frac{|h(x)-h(x-t)|}{|h(x+t)-h(x)|}\right\}.$$
The symmetric distortion function measures how far the homeomorphism $h$ is from mapping adjacent intervals of equal length to adjacent intervals of equal length. If there exists $\varrho>0$ such that $\rho_h(x,t)\leq \varrho$ for all $x\in \R$, $t>0$, then $h$ is called \textit{quasisymmetric}. Beurling and Ahlfors proved in \cite{BeurlingAhlfors:extension} that 
if $h$ is quasisymmetric, then there exists a \textit{quasiconformal} extension of $h$ to the upper half plane. Recall that a homeomorphism $H\colon U\to V$ between two open sets $U,V\subset \R^2$ is quasiconformal if $H$ is orientation-preserving, $H$ lies in the Sobolev space  $W^{1,1}_{\loc}(U)$, the Jacobian $J_H$ lies in $L^1_{\loc}(U)$, and the \textit{quasiconformal dilatation}
\begin{align*}
K_H(x,y)= \inf\{ K\geq 1 : \|DH(x,y)\|^2\leq K J_H(x,y)  \}
\end{align*}
of $H$ lies in $L^\infty$, where $\|DH\|$ denotes the operator norm of the differential matrix of $H$. 

If one relaxes the assumption that $K_H\in L^\infty$ to merely $K_H<\infty$ a.e., then we say that $H$ is a mapping of \textit{finite distortion}. See \cite{Koskela:survey} for an enlightening survey and \cite{HenclKoskela:finitedistortion} for a treatise on the general theory of these mappings. Among mappings of finite distortion, of particular interest are the mappings of \textit{exponentially integrable distortion}, or else \textit{David homeomorphisms}, because of their increased regularity and of the fact that they provide a substitute for quasiconformal maps in many cases when the use of the latter is not possible, such as in the framework of Complex Dynamics mentioned above. These maps were introduced by David in \cite{David:maps} and their defining condition is that 
\begin{align*}
\int_U e^{pK_H(x,y)} \, d\sigma(x,y) <\infty
\end{align*}
for some $p>0$, where $\sigma$ denotes the spherical measure on the Riemann sphere $\widehat{\C}$ and $U\subset \widehat{\C}$ is an open set. We prove the following result, which provides extensions of boundary homeomorphisms to mappings that have exponentially integrable distortion. We also refer to these extensions as \textit{David extensions}. We denote by $\UHP$ the upper half-plane and by $\D$ the unit disk in the plane.

\begin{theorem}\label{theorem:david_line}
Let $h\colon \R\to\R$ be an increasing homeomorphism such that 
\begin{align*}
\int_{\UHP} e^{q \rho_h(x,y)} \, d\sigma(x,y) <\infty
\end{align*}
for some $q>0$. Then there exists an extension of $h$ to a homeomorphism of $\UHP$ that has exponentially integrable distortion. 
\end{theorem}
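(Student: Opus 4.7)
The plan is to let $H\colon \UHP\to \UHP$ be the Beurling--Ahlfors extension of $h$ and verify directly that its quasiconformal dilatation $K_H$ is exponentially integrable on $\UHP$ with respect to the spherical measure $\sigma$. The main technical theorem advertised in the abstract provides a pointwise upper bound of the schematic form
\begin{align*}
K_H(x,y) \leq \frac{A}{|\Omega(x,y)|}\int_{\Omega(x,y)} \rho_h(\xi,\eta)\, d\xi\, d\eta + B,
\end{align*}
where $\Omega(x,y)\subset \UHP$ is a bounded neighborhood of $(x,y)$ of diameter comparable to $y$ (and hence area comparable to $y^2$), and $A,B$ are absolute constants. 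I would take this averaging estimate as given, since it is announced as the principal technical accomplishment of the paper.

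Granting the averaging bound, the proof becomes a short convexity argument. Choose $p\in(0,q/A]$ and apply Jensen's inequality to the convex function $t\mapsto e^{pt}$ on the right-hand side above:
\begin{align*}
e^{pK_H(x,y)} \leq e^{pB}\cdot \frac{1}{|\Omega(x,y)|}\int_{\Omega(x,y)} e^{pA\rho_h(\xi,\eta)}\, d\xi\, d\eta.
\end{align*}
Integrate this inequality over $\UHP$ against $\sigma$ and apply Fubini. The weight that each point $(\xi,\eta)$ acquires is
\begin{align*}
\int_{\{(x,y)\,:\,(\xi,\eta)\in\Omega(x,y)\}} \frac{d\sigma(x,y)}{|\Omega(x,y)|},
\end{align*}
and since the region of integration is contained in a neighborhood of $(\xi,\eta)$ of diameter $\asymp \eta$, while $|\Omega(x,y)|\asymp y^2\asymp \eta^2$ on that region, this weight is comparable to the local density of $\sigma$ at $(\xi,\eta)$. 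One concludes
\begin{align*}
\int_\UHP e^{pK_H(x,y)}\, d\sigma(x,y) \lesssim \int_\UHP e^{pA\rho_h(\xi,\eta)}\, d\sigma(\xi,\eta) \leq \int_\UHP e^{q\rho_h(\xi,\eta)}\, d\sigma(\xi,\eta)<\infty,
\end{align*}
which is the desired exponential integrability of the distortion.

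The two genuine obstacles lie outside this short computation. The first, and the main one, is the averaging upper bound on $K_H$ itself; the paper plans to prove this sharp estimate as a separate theorem, and I would use it as a black box. The second is verifying that the Beurling--Ahlfors formula still produces a \emph{homeomorphism} of $\UHP$ that continuously extends $h$ to $\R$ under the hypothesis, which is strictly weaker than quasisymmetry. The classical Beurling--Ahlfors injectivity proof uses quasisymmetry quantitatively, so one must adapt it: from the hypothesis the function $\rho_h$ is finite $\sigma$-a.e., hence finite a.e.\ in the usual sense, which combined with the monotonicity properties of the B--A formula (the partial derivatives of $u$ and $v$ are expressible as integrals of $h$ against nonnegative kernels) should give that $H$ is an orientation-preserving local homeomorphism on $\UHP$ with continuous boundary values equal to $h$; a standard degree or boundary argument then promotes this to a global homeomorphism.
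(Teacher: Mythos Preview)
Your overall plan---use the paper's main averaging estimate as a black box, apply Jensen's inequality to the exponential, then Fubini---matches the paper's derivation of this theorem from Theorem~\ref{theorem:convex_line}, which in turn rests on Theorem~\ref{theorem:main}. Two points deserve correction.

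First, the averaging estimate you postulate is not quite the one the paper proves. Theorem~\ref{theorem:main} bounds $K_h(x,y)$ by $C_0$ times the maximum of $\rho_h(x,y)$ and the \emph{one-dimensional} average
\[
\frac{2}{y}\int_{-y/4}^{y/4}\rho_h(x+z,y-|z|)\,dz
\]
along two short segments through $(x,y)$, not an area average over a two-dimensional neighborhood $\Omega(x,y)$. This matters for the Fubini step: with a line-segment average, the set of $(x,y)$ for which a fixed $(\xi,\eta)$ lies on the segment has Lebesgue measure zero, so your weight heuristic as stated collapses. The paper handles this correctly by treating the triple integral in the variables $(x,y,z)$, changing to $(w,u,y)$ with $w=x+y-u$, $u=y-z$, and observing that the residual $y$-integral $\int_u^{4u/3}\frac{2}{y}\,dy$ is a constant while the spherical density satisfies $g(w+u-y,y)\lesssim g(w,u)$. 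The idea is the same as yours, but the execution is genuinely a three-variable change of coordinates rather than a two-variable weight comparison.

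Second, your worry about the Beurling--Ahlfors extension failing to be a homeomorphism is unfounded. As recorded in Section~\ref{section:beurling_ahlfors}, for \emph{any} increasing homeomorphism $h\colon\R\to\R$ the map $H$ is a proper $C^1$ local diffeomorphism of $\UHP$, hence a covering map, hence a global homeomorphism of $\UHP$ extending continuously to $h$ on $\R$. No quasisymmetry, and no hypothesis on $\rho_h$, is needed for this.
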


An analogous theorem can also be formulated for homeomorphisms of the circle. For $a,b\in S^1$ we denote by $\ell(a,b)$ the length of the arc of the circle that connects $a$ to $b$ in the positive orientation. If $h\colon S^1\to S^1$ is an orientation-preserving homeomorphism, we define the \textit{circular} symmetric distortion function
\begin{align*}
\rho_h^c(\theta,t)= \max \left\{ \frac{\ell(h(e^{i\theta}),h(e^{i(\theta+t)}))}{\ell(h(e^{i(\theta-t)}),h(e^{i\theta}))}, \frac{\ell(h(e^{i(\theta-t)}),h(e^{i\theta}))}{\ell(h(e^{i\theta}),h(e^{i(\theta+t)}))} \right\}
\end{align*}
for $\theta\in [0,2\pi]$ and $t\in (0,\pi/2)$. One could alternatively use Euclidean distances in the definition of the circular symmetric distortion function and obtain a quantity that is comparable to $\rho_h^c$ for all small $t$.

\begin{theorem}\label{theorem:david_circle}
Let $h\colon S^1\to S^1$ be an orientation-preserving homeomorphism such that
\begin{align*}
\int_0^{2\pi} \int_0^{\pi/2} e^{q \rho_h^c(\theta,t)} \, dtd\theta <\infty
\end{align*}
for some $q>0$. Then there exists an extension of $h$ to a homeomorphism of $\D$ that has exponentially integrable distortion.
\end{theorem}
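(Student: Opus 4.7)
The plan is to derive Theorem~\ref{theorem:david_circle} from Theorem~\ref{theorem:david_line} by conjugating with a Möbius transformation.

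First I would set up the reduction. After composing $h$ with rotations on both sides (which are conformal and preserve both the hypothesis and the conclusion), I may assume $h(1) = 1$. Let $\psi \colon \D \to \UHP$ be the Cayley transform $\psi(z) = i(1+z)/(1-z)$; it restricts to an orientation-preserving homeomorphism $S^1 \setminus \{1\} \to \R$. Set $\tilde h = \psi \circ h \circ \psi^{-1} \colon \R \to \R$, which is an increasing homeomorphism.

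The key step is to show that the circular exponential integrability hypothesis on $\rho_h^c$ transfers to the half-plane hypothesis of Theorem~\ref{theorem:david_line} for $\tilde h$. Viewing $(\theta, t) \in [0, 2\pi] \times (0, \pi/2)$ as a pair (boundary point $e^{i\theta}$, scale $t$) and similarly $(x, y) \in \UHP$ as (boundary point $x$, scale $y$), the Cayley transform induces a correspondence between these two parametrizations which is essentially scale-preserving away from $1 \in S^1$. On compact subsets of $S^1 \setminus \{1\}$ the map $\psi$ is bi-Lipschitz, so $\rho_{\tilde h}(x,y)$ is comparable to $\rho_h^c(\theta, t)$ at corresponding scales, and the pullback of $d\sigma(x,y)$ is comparable to $dt\,d\theta$ with a bounded Jacobian. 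On the complementary region — near $1 \in S^1$, equivalently near $\infty$ on $\R$ — the spherical weight on $\UHP$ heavily suppresses the integrand, so after splitting $\UHP$ into a large ball and its complement one obtains
\begin{align*}
\int_{\UHP} e^{q' \rho_{\tilde h}(x,y)} \, d\sigma(x,y) < \infty
\end{align*}
for some $q' > 0$ (possibly smaller than $q$), by bounding the first piece via the bi-Lipschitz correspondence and the second piece via the finiteness of $\sigma$ at infinity together with a suitable small-scale control of $\tilde h$ coming from the hypothesis on $h$.

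With the transfer complete, I would apply Theorem~\ref{theorem:david_line} to obtain a David extension $\tilde H \colon \UHP \to \UHP$ of $\tilde h$, and then set $H = \psi^{-1} \circ \tilde H \circ \psi \colon \D \to \D$. Since $\psi$ and $\psi^{-1}$ are conformal, $K_H(z) = K_{\tilde H}(\psi(z))$ almost everywhere, $H$ is a homeomorphism of finite distortion, and $H|_{S^1} = h$. A final change of variables, using that the pushforward of the spherical measure on $\D$ under $\psi$ is mutually absolutely continuous with the spherical measure on $\UHP$ and has bounded Radon--Nikodym derivative on bounded regions of $\UHP$, produces a $p' > 0$ with $\int_{\D} e^{p' K_H(z)}\, d\sigma(z) < \infty$, as required.

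The main obstacle is the quantitative matching of parametrizations in the second step, and in particular the treatment of the singular point $1 \in S^1$ under the Cayley transform. The spherical weighting on $\UHP$ is precisely what tames this region; the bulk of the technical work will consist in making the comparison between the circular parametrization $(\theta, t)$ and the Euclidean parametrization $(x, y)$ sufficiently explicit to absorb the distortion of $\psi$ at scale $t \to 0$ into a controlled reduction of the exponent from $q$ to $q'$.
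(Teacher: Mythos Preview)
Your approach differs from the paper's in a way that introduces a real difficulty you have not addressed. The paper does \emph{not} use the Cayley transform; it lifts $h$ via the universal cover $\psi(z)=e^{2\pi i z}$ to an increasing homeomorphism $\widetilde h$ of $\R$ that commutes with $x\mapsto x+1$. Because this lift is by arclength, one has the \emph{exact} identity $\rho_h^c(\theta,t)=\rho_{\widetilde h}(\theta/2\pi,t/2\pi)$, so the integrability hypothesis transfers with no loss. Periodicity then makes the Beurling--Ahlfors extension descend to $\D$, and the change of measure between $\UHP$ and $\D$ is handled by a direct Jacobian computation. The whole argument is a specialization of the proof of Theorem~\ref{theorem:convex_circle} with $\Phi(x)=e^x$.

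The gap in your plan is the sentence ``On compact subsets of $S^1\setminus\{1\}$ the map $\psi$ is bi-Lipschitz, so $\rho_{\widetilde h}(x,y)$ is comparable to $\rho_h^c(\theta,t)$ at corresponding scales.'' Bi-Lipschitz control of $\psi$ does \emph{not} give a pointwise comparison of symmetric distortion functions: a M\"obius map of the boundary sends a symmetric triple $(x-y,x,x+y)$ to a triple that is in general \emph{not} of the form $(e^{i(\theta-t)},e^{i\theta},e^{i(\theta+t)})$, so there is no single $(\theta,t)$ for which $\rho_{\widetilde h}(x,y)$ and $\rho_h^c(\theta,t)$ are the same ratio. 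At best one can bound $\rho_{\widetilde h}(x,y)$ by a product or iterated combination of values $\rho_h^c(\theta_i,t_i)$ at several nearby scales, and such multiplicative bounds do not in general preserve exponential integrability. The issue is most acute when the scale $y$ is comparable to the distance from $x$ to $\infty$ in the spherical metric (equivalently, when $t$ is comparable to $|\theta|$), which is precisely the regime your splitting into ``near $1$'' and ``away from $1$'' cannot isolate. Your proposal also leaves unclear what ``suitable small-scale control of $\widetilde h$'' near $\infty$ means; the hypothesis gives information only about the circular distortion at scales $t<\pi/2$, and translating this to the Euclidean distortion of $\widetilde h$ at large $|x|$ again runs into the asymmetry problem. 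The exponential-cover approach sidesteps all of this because the identification of distortion functions is exact rather than approximate.
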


\begin{corollary}\label{corollary:david_circle}
Let $h\colon S^1\to S^1$ be an orientation-preserving homeomorphism and suppose that there exists a non-negative function $g\in L^1([0,2\pi])$ such that
\begin{align*}
\rho_h^c(\theta,t) = O\left( \log \left(\frac{1+g(\theta)}{t} \right)\right)
\end{align*}
as $t\to 0$. Then there exists an extension of $h$ to a homeomorphism of $\D$ that has exponentially integrable distortion.
\end{corollary}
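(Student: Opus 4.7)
The plan is to apply Theorem~\ref{theorem:david_circle} by verifying that the exponential integrability hypothesis holds for some $q>0$. That is, I need to show
\[
\int_0^{2\pi}\int_0^{\pi/2} e^{q\rho_h^c(\theta,t)}\,dt\,d\theta <\infty.
\]

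First, I would interpret the asymptotic $\rho_h^c(\theta,t) = O(\log((1+g(\theta))/t))$ as yielding constants $C>0$ and $\delta\in(0,\pi/2)$ independent of $\theta$ so that $\rho_h^c(\theta,t)\leq C\log((1+g(\theta))/t)$ for all $\theta\in[0,2\pi]$ and $t\in(0,\delta)$. I would then split the inner integral at $t=\delta$.

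On the range $t\in(0,\delta)$, choosing $q$ small enough that $\alpha:=qC<1$, the integrand is at most $((1+g(\theta))/t)^{\alpha}$, and integrating in $t$ yields a bound of the form $C_1(1+g(\theta))^{\alpha}$. On the range $t\in[\delta,\pi/2]$, the points $e^{i\theta}$ and $e^{i(\theta\pm t)}$ stay at mutual arc distance at least $\delta$, so by uniform continuity of $h^{-1}$ on the compact circle, the arc lengths appearing in the definition of $\rho_h^c$ are bounded away from zero. Hence $\rho_h^c$ is uniformly bounded on this set and contributes a finite constant to the inner integral.

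Combining these two pieces and integrating in $\theta$, the bound $(1+g(\theta))^{\alpha}\leq 1+g(\theta)\in L^1([0,2\pi])$ (valid since $\alpha<1$) gives a finite total, and Theorem~\ref{theorem:david_circle} completes the argument. The only mildly delicate point is the uniformity of the implicit constant in the $O$-notation with respect to $\theta$, which is implicit in the statement of the corollary; otherwise the verification is a routine calculation, and no genuine obstacle is expected.
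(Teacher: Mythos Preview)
Your proposal is correct and follows the route the paper intends: the corollary is stated without proof as an immediate consequence of Theorem~\ref{theorem:david_circle}, and your verification of the exponential integrability hypothesis is exactly the expected computation. One cosmetic remark: for the range $t\in[\delta,\pi/2]$ you may simply observe that $\rho_h^c$ is continuous on the compact set $[0,2\pi]\times[\delta,\pi/2]$ (this is how the paper argues in the analogous step of the proof of Theorem~\ref{theorem:convex_circle}), which is slightly quicker than invoking uniform continuity of $h^{-1}$.
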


This extension result can be applied in order to turn hyperbolic dynamical systems into parabolic ones with global David homeomorphisms of the sphere. In fact, the main result of \cite{LyubichMerenkovMukherjeeNtalampekos:David} relies on a weaker version of this corollary from \cite{Zakeri:boundary}. Theorem \ref{theorem:david_circle} follows from the following more general result.

\begin{theorem}\label{theorem:convex_circle}
There exists a uniform constant $C_0>0$ such that the following holds. Let $\Phi\colon [0,\infty)\to [0,\infty)$ be an increasing convex function and suppose that $h\colon S^1\to S^1$ is an orientation-preserving homeomorphism such that
\begin{align*}
\int_0^{2\pi} \int_0^{\pi/2} \Phi({q \rho_h^c(\theta,t)}) \, dtd\theta <\infty
\end{align*}
for some $q>0$. Then there exists an extension of $h$ to a homeomorphism $H$ of $\D$ that has finite distortion and 
\begin{align*}
\int_{\D} \Phi( qC_0^{-1}K_H(x,y)) \, dxdy<\infty.
\end{align*}
\end{theorem}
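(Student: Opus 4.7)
The plan is to construct the extension $H$ by a Beurling--Ahlfors-type formula adapted to the disk and to establish a pointwise bound on the quasiconformal dilatation $K_H(z)$ in terms of a two-scale average of $\rho_h^c$ over a region attached to $z$; convexity of $\Phi$ then converts boundary integrability of $\Phi(q\rho_h^c)$ into interior integrability of $\Phi(qC_0^{-1}K_H)$. The extension $H\colon\D\to\D$ is defined by transporting the classical Beurling--Ahlfors formula to the disk (via Cayley charts, or directly using arc averages of $h$), so that at a point $z$ near $e^{i\theta}\in S^1$ the value $H(z)$ is built from weighted averages of $h(e^{i(\theta\pm t)})$ on scales $t$ of order $1-|z|$.

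The crucial technical ingredient, which is the circular counterpart of the sharp upper bound promised in the abstract for the real line, is a pointwise estimate
\[
K_H(z)\le C_0\cdot \frac{1}{|I_z|\,|J_z|}\int_{I_z}\int_{J_z}\rho_h^c(\theta,t)\,dt\,d\theta,
\]
where $I_z\subset[0,2\pi]$ is a short arc centered at $\arg z$ of length of order $1-|z|$, $J_z\subset(0,\pi/2)$ is a scale interval of length of order $1-|z|$, and $C_0$ is universal. This is the step I expect to be the main obstacle: one must differentiate the integral formula defining $H$, express $\|DH(z)\|^2$ and $J_H(z)$ in terms of the boundary differences of $h$ at nearby scales, and organize the resulting quotients so that each piece reduces cleanly to an average of $\rho_h^c$, with all constants absorbed into a single universal $C_0$ that does not depend on $\Phi$ or $q$.

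Given this estimate, convexity finishes the argument. Since $\Phi$ is convex and non-decreasing and $q>0$, Jensen's inequality gives
\[
\Phi\bigl(qC_0^{-1}K_H(z)\bigr)\le \frac{1}{|I_z|\,|J_z|}\int_{I_z}\int_{J_z}\Phi\bigl(q\rho_h^c(\theta,t)\bigr)\,dt\,d\theta.
\]
Integrating over $z\in\D$ and applying Fubini, I would observe that for fixed $(\theta,t)$ the set $\{z\in\D:(\theta,t)\in I_z\times J_z\}$ has area of order $t^2$, exactly balancing the weight $|I_z|^{-1}|J_z|^{-1}$ of order $t^{-2}$, so
\[
\int_{\D}\Phi\bigl(qC_0^{-1}K_H(z)\bigr)\,dxdy\le C\int_0^{2\pi}\int_0^{\pi/2}\Phi\bigl(q\rho_h^c(\theta,t)\bigr)\,dt\,d\theta<\infty,
\]
which is the desired conclusion. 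Finite distortion of $H$ (i.e.\ $K_H<\infty$ almost everywhere) follows at once because $\Phi$ is increasing and $\Phi(qC_0^{-1}K_H)$ is integrable, while the fact that $H$ is a homeomorphism of $\overline{\D}$ extending $h$ is inherited from the Beurling--Ahlfors construction and does not require the integrability hypothesis.
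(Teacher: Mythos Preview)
Your overall strategy---pointwise control of $K_H$ by averages of $\rho_h^c$, followed by Jensen and Fubini---is exactly the paper's strategy. The gap is in the specific pointwise estimate you write down. The paper does \emph{not} prove a bound of the form $K_H(z)\le C_0\cdot(\text{2D average of }\rho_h^c\text{ over }I_z\times J_z)$. Its main theorem (Theorem~\ref{theorem:main}, stated on the line) gives instead
\[
K_h(x,y)\le C_0\max\Bigl\{\rho_h(x,y),\ \tfrac{2}{y}\int_{-y/4}^{y/4}\rho_h(x+z,y-|z|)\,dz\Bigr\},
\]
a maximum of the \emph{point value} $\rho_h(x,y)$ and a one-dimensional average along two specific segments. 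The point-value term cannot be absorbed into a two-dimensional rectangular average: since the paper also proves the lower bound $K_h(x,y)\ge \rho_h(x,y)/4$, a narrow continuous spike of $\rho_h$ at $(x,y)$ forces $K_h(x,y)$ to be large while the 2D average over a box of side $\sim y$ can remain moderate. So the estimate you isolate as ``the main obstacle'' is not merely unproved---in this clean form it is unlikely to hold; the correct sharp bound has a different shape, and establishing it is the technical heart of the paper.

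With the correct bound in hand, the paper proceeds as you suggest: apply $\Phi$, use Jensen on the average term, and integrate. The $\rho_h(x,y)$ term contributes directly (its integral is already the hypothesis), and the segment-average term is handled by a change-of-variables/Fubini computation analogous to the one you sketch. For the passage to the circle, rather than working directly on $\D$ or through a Cayley chart, the paper lifts $h$ to a homeomorphism $\widetilde h$ of $\R$ commuting with $x\mapsto x+1$ via the covering $\psi(z)=e^{2\pi iz}$, applies the line result (Theorem~\ref{theorem:convex_line}) to get $\Phi(qC_0^{-1}K_{\widetilde H})\in L^1(\UHP,d\sigma)$, and then descends to $\D$. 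The periodicity of $\widetilde h$ is also used to bound $\rho_{\widetilde h}$ at scales $t\ge 1/4$ (your hypothesis only covers $t<\pi/2$), a point your outline does not address.
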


The constant $C_0$ is the same as the constant of Theorem \ref{theorem:main} below. We prove Theorem \ref{theorem:convex_circle} in Section \ref{section:consequences}. For $\Phi(x)=e^x$, this theorem implies Theorem \ref{theorem:david_circle}. Moreover, if $\Phi(x)=x^q$, $q\geq 1$, then we obtain extensions having \textit{$q$-integrable distortion} and if $\Phi(x)=e^{x/\log(e+x)}$ then we obtain extensions of \textit{subexponentially integrable distortion}. The latter class of mappings is slightly weaker than David homeomorphisms, but their general theory has no essential differences. Moreover, the condition of subexponentially integrable distortion is very close to the optimal sufficient condition for obtaining solutions to the \textit{Beltrami equation}; see \cite[Section 20.5, p.~570]{AstalaIwaniecMartin:quasiconformal} for more background. We remark that Theorem \ref{theorem:convex_circle} generalizes the result of Zakeri \cite[Theorem B]{Zakeri:boundary}, which uses $\sup_{t>0} \rho_h^c(x,t)$ in place of $\rho_h^c$.

We do not know whether the assumption of Theorem \ref{theorem:convex_circle} is also necessary for extensions that have $q$-integrable or exponentially integrable distortion. In fact, so far there exists a necessary and sufficient condition only for mappings of $1$-integrable distortion, due to Astala, Iwaniec, Martin, and Onninen \cite[Theorem 11.1]{AstalaEtAl:ExtremalFiniteDistortion}. Namely, a homeomorphism $h\colon S^1\to S^1$ extends to a homeomorphism $H$ of the disk with $K_H\in L^1(\D)$ if and only if 
\begin{align*}
\int_{0}^{2\pi}\int_0^{2\pi}  \left |\log| h(e^{i\theta})-h(e^{i\phi})|  \right| \, d\theta d\phi<\infty.
\end{align*}
We pose the following question.
\begin{question}\label{question:necessary}
Is the sufficient condition
\begin{align*}
\int_0^{2\pi} \int_0^{\pi/2} e^{q \rho_h^c(\theta,t)} \, dtd\theta <\infty \quad \left(\textrm{resp.}\quad \int_0^{2\pi} \int_0^{\pi/2} \left(\rho_h^c(\theta,t)\right)^q \, dtd\theta <\infty \right)
\end{align*}
also necessary for obtaining an extension of exponentially integrable distortion (resp.\ $q$-integrable distortion, $q\geq 1$)?
\end{question}
A positive answer to the question would lead to great progress towards understanding mappings of exponentially integrable distortion and their boundary behavior. It is pointed out in \cite[pp.~248--249]{Zakeri:boundary} that the only known necessary condition for an extension of exponentially integrable distortion is $\rho_h^c(\theta,t) =O(t^{-\alpha})$ as $t\to0$ for some $\alpha>0$, which is much weaker than the condition in question. 

Next, we discuss the main theorem that leads to all the mentioned results. Let $h\colon \R \to \R$ be an increasing homeomorphism. Beurling and Ahlfors constructed in \cite{BeurlingAhlfors:extension} an operator that extends $h$ to a $C^1$-diffeomorphism of the upper half-plane. We denote by $K_h$ the quasiconformal dilatation of the extension. They showed that if  $\rho_h(x,t)\leq \varrho$ for some $\varrho>0$, then the extension of $h$ is quasiconformal and moreover $K_h\leq \varrho^2$. This bound was later improved by Reed  \cite{Reed:BeurlingAhlfors} to $K_h\leq 8\varrho$, and by Li \cite{Li:BeurlingAhlfors}, who proved that $K_h\leq 4.2\varrho$. Later Lehtinen \cite{Lehtinen:BeurlingAhlfors} improved this bound to $2\varrho$, which is currently the best known bound; see also \cite{Lehtinen:BeurlingAhlfors2} and \cite{DeLin:BeurlingAhlfors}.

It is crucial for all these results to assume that $\rho_h(x,t) \leq \varrho$ \textit{for all} $x\in \R$, $t>0$, and one \textit{cannot} obtain in general any bound of the form 
\begin{align}\label{bound:dream}
K_h(x,y)\leq C \rho_h(x,y),
\end{align}
which would be an ideal bound for the extension problem. This was observed by Z.\ Chen and He \cite{Chen:BeurlingAhlfors,ChenHe:BeurlingAhlfors}, who gave examples of homeomorphisms $h$ such that $K_h(0,y)\rho_h(0,y)^{-1} \to\infty$ as $y\to0$. Z.\ Chen \cite{Chen:BeurlingAhlfors} also established, under no further assumptions on $h$, a bound of the form 
$$K_h(x,y)\leq C\rho_h(x,y)( \rho_h(x+y/2,y/2)+\rho_h(x-y/2,y/2) ),$$
which is, roughly speaking, of the form  $K_h=O(\rho_h^2)$. Nevertheless, this is a weak bound and does not imply sufficient integrability of $K_h$ for practical purposes. Therefore, in previous works, in order to obtain favorable bounds for $K_h$ in the spirit of \eqref{bound:dream}, further assumptions were imposed on the symmetric distortion function $\rho_h$. 

For instance, J.\ Chen, Z.\ Chen, and He \cite{ChenChenHe:boundary} proved that there exists a uniform constant $C>0$ such that if $\rho_h(x,t) \leq \varrho(t)$ for some decreasing function $\varrho(t)$, then 
\begin{align*}
K_h(x,y)\leq C \varrho(y/2)
\end{align*}
for all $x\in \R$, $y>0$. In fact, they claim this inequality with $\varrho(y)$ in place of $\varrho(y/2)$, but their proof contains an error that we point out in Remark \ref{remark:error} in the end of Section \ref{section:proof_main}. Zakeri in \cite{Zakeri:boundary} observed that this inequality can be used to obtain a David extension under the condition 
$$\varrho(y)= O(\log(1/y))$$
as $y\to 0$; the error mentioned above does not affect this result. We remark that Theorems \ref{theorem:david_line}--\ref{theorem:david_circle} and Corollary \ref{corollary:david_circle} are stronger.  Moreover, under the assumptions that $h(x+1)=h(x)+1$ for $x\in \R$ and $\exp(\sup_{y>0} \rho_h(\cdot ,y))\in L^q([0,1])$ for some $q>0$, Zakeri obtains a David extension by proving the bound
$$K_h(x,y)\leq 4 \max\left \{\sup_{y>0} \rho_h(x,y),  C_1(q)\log\left(\frac{C_2(h)}{y}\right)\right\},$$
where the constant $C_2(h)$ depends on the $L^q$ norm of $\exp(\sup_{y>0} \rho_h(x,y))$. De Faria \cite{deFaria:David} remarked that this inequality is not optimal, by constructing homeomorphisms $h$ of $\R$ that extend to David homeomorphisms of $\UHP$, but $\sup_{y>0} \rho_h(x,y) =\infty$ for a.e.\ $x\in \R$. His examples, though, satisfy the sufficient condition $\rho_h(x,y)= O(\log(1/y))$.

Under \textit{no assumptions} whatsoever on the homeomorphism $h$ we prove, as our main theorem, the following optimal bounds for the quasiconformal dilatation $K_h$ of the Beurling--Ahlfors extension. These bounds are close to the ideal bound \eqref{bound:dream}, but as we will see, for many practical purposes they are as good as that.

\begin{theorem}\label{theorem:main}
There exists a uniform constant $C_0>0$ such that the following holds. Let $h\colon \R\to \R$ be an increasing homeomorphism. Then 
\begin{align*}
\frac{\rho_h(x,y)}{4}\leq K_h(x,y) \leq  C_0 \max \left\{\rho_h(x,y), \frac{2}{y}\int_{-y/4}^{y/4} \rho_h(x+z,y-|z|)\, dz \right\}
\end{align*}
for all $x\in \R$ and $y>0$. 
\end{theorem}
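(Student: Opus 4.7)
The plan is to express the Beurling--Ahlfors extension as $H = U + iV$ with $U = (A+B)/2$ and $V = (A-B)/2$, where $A(x,y) = y^{-1}\int_0^y h(x+t)\,dt$ and $B(x,y) = y^{-1}\int_0^y h(x-t)\,dt$. Setting $P = h(x+y) - h(x)$, $Q = h(x) - h(x-y)$, $a = h(x+y) - A$ and $b = B - h(x-y)$ (so $a \in [0,P]$, $b \in [0,Q]$), a direct differentiation gives
$$U_x = \frac{P+Q}{2y},\quad V_x = \frac{P-Q}{2y},\quad U_y = \frac{a-b}{2y},\quad V_y = \frac{a+b}{2y}.$$
Since $K_h + K_h^{-1} = (U_x^2 + U_y^2 + V_x^2 + V_y^2)/(U_xV_y - U_yV_x)$, expanding the sum of squares and the determinant yields the central identity
$$K_h + K_h^{-1} = \frac{P^2 + Q^2 + a^2 + b^2}{Pb + Qa}.$$
The lower bound is then immediate: dropping $a^2+b^2 \geq 0$ and using $Pb + Qa \leq 2PQ$ gives $K_h + K_h^{-1} \geq (P^2+Q^2)/(2PQ) \geq \rho_h/2$, and $K_h \geq 1$ forces $K_h \geq \rho_h/4$.

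For the upper bound, assume without loss of generality $P \geq Q$, so $\rho_h(x,y) = P/Q$. I split the argument according to whether $h$ is \emph{balanced} or \emph{concentrated} near $x$. In the balanced case $\max(a/P, b/Q) \geq 1/16$, one has $Pb + Qa \geq PQ/16$, and bounding the numerator by $4P^2$ gives directly $K_h \leq 64\rho_h$. In the concentrated case $a < P/16$ and $b < Q/16$, the elementary inequality $ay \geq s(P - \tilde h(s))$ (with $\tilde h(s) = h(x+s) - h(x)$) evaluated at $s = y/2$, together with its mirror on the other side, yields $h(x + y/2) - h(x) \geq 7P/8$ and $h(x) - h(x - y/2) \geq 7Q/8$. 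Writing $P'(z) = h(x+y) - h(x+z)$ and $Q'(z) = h(x+z) - h(x+2z - y)$ for $z \in (0, y/4)$, monotonicity of $h$ then gives $Q'(z) \geq 7Q/8$ on $(0, y/4)$ and, symmetrically, $P'(z) \geq 7P/8$ on $(-y/4, 0)$. A single application of Cauchy--Schwarz combined with the identity $\int_0^y P'(z)\,dz = ya$ yields
$$\int_0^{y/4} \frac{dz}{P'(z)} \;\geq\; \frac{(y/4)^2}{\int_0^{y/4} P'(z)\,dz} \;\geq\; \frac{y}{16a},$$
and summing the two symmetric contributions produces the pivotal lower bound
$$I := \frac{2}{y}\int_{-y/4}^{y/4}\rho_h(x+z, y - |z|)\,dz \;\geq\; \frac{7(Pa + Qb)}{64\, ab}.$$
The theorem now follows from the algebraic identity $(Pb+Qa)(Qb+Pa) = PQ(a^2+b^2) + ab(P^2+Q^2)$ together with $ab \leq PQ$, which together imply $(P^2+Q^2+a^2+b^2)/(Pb+Qa) \leq (64/7)\, I$ in the concentrated case. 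Taking $C_0 = 64$ handles both cases simultaneously.

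The main obstacle is the concentrated case, where $Pb + Qa$ can be arbitrarily small compared to $PQ$ and no bound of the form $K_h \leq C\rho_h(x,y)$ can possibly hold. The key insight is that smallness of both $a$ and $b$ forces $h$ to resemble a jump at $x$, which in turn forces the ratios $Q'(z)/P'(z)$ for $z > 0$ (and $P'(z)/Q'(z)$ for $z < 0$) to be very large; the Cauchy--Schwarz estimate on $\int 1/P'(z)\,dz$ is the clean technical device that converts this qualitative observation into the required quantitative lower bound on the average $I$. The specific window $[-y/4, y/4]$ is forced by the range on which the concentration estimates at $s = y/2$ can be applied safely.
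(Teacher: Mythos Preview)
Your proof is correct and follows a genuinely different route from the paper's. Both arguments start from the same identity $K_h + K_h^{-1} = (P^2+Q^2+a^2+b^2)/(Pb+Qa)$; the paper writes it in the equivalent $(\beta,\xi,\eta)$ variables, where $\xi=a/P$, $\eta=b/Q$, $\beta=Q/P$. For the upper bound, however, the paper treats $F(\xi,\eta)=K_h+K_h^{-1}$ as a convex function, confines $(\xi,\eta)$ to an explicit polygon by producing a linear lower bound on $(\xi,\eta)$ (via a pointwise inequality $h^*(t)\le 1 + h^*(-1/2)/(1+\rho^*)$ followed by Jensen's inequality on $1/(1+\rho^*)$), and then evaluates $F$ at the vertices of that polygon. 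You avoid convexity entirely: you isolate the bad regime directly as ``both $a/P$ and $b/Q$ small'', use the integral identity $ya=\int_0^y P'$ to force the concentration estimates $\tilde h(y/2)>7P/8$ and its mirror, and then apply Cauchy--Schwarz to $\int 1/P'$ to bound the average $I$ from below. The closing step, reducing $(P^2+Q^2+a^2+b^2)/(Pb+Qa)\le (Pa+Qb)/(ab)$ to the single inequality $ab\le PQ$ via the factorisation $(Pb+Qa)(Pa+Qb)=PQ(a^2+b^2)+ab(P^2+Q^2)$, is particularly clean.

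What each approach buys: the paper's convexity-and-vertices argument is more geometric and yields the slightly better constant $C_0=50$; your approach is purely algebraic, needs no case analysis beyond the balanced/concentrated dichotomy, and makes transparent why the averaging window $[-y/4,y/4]$ is natural, at the mild cost of $C_0=64$.
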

This result implies the bounds of Chen et al.\ and Zakeri with possibly different constants. One can take $C_0=50$, but we have not attempted to optimize the value of the constant $C_0$. The integral in the right-hand side is the average of $\rho_h$ on the two segments, from the point $(x,y)$ to $(x-y/4,3y/4)$ and from $(x,y)$ to $(x+y/4,3y/4)$, as shown in Figure \ref{figure:average}. Moreover, upon integration, this result implies that $K_h$ and $\rho_h$ satisfy essentially the same integrability conditions.

\begin{figure}
\begin{tikzpicture}
\fill[gray!20!white](-5,0) rectangle (5,5);

\draw[->] (-5,0)-- (5,0);
\node[anchor=south] at (4,0) {$\R$};
\node[anchor=north] at (4,5) {$\UHP$};

\draw (-1,3)--(0,4)--(1,3);

\node at (0,4) {\textbullet};
\node[anchor=south] at (0,4) {$(x,y)$};

\node at (-1,3) {\textbullet};
\node[anchor=east] at (-1,3) {$(x-y/4,3y/4)$};

\node at (1,3) {\textbullet};
\node[anchor=west] at (1,3) {$(x+y/4,3y/4)$};
\end{tikzpicture}
\caption{The segments on which $\rho_h$ is averaged in Theorem \ref{theorem:main}.}\label{figure:average}
\end{figure}
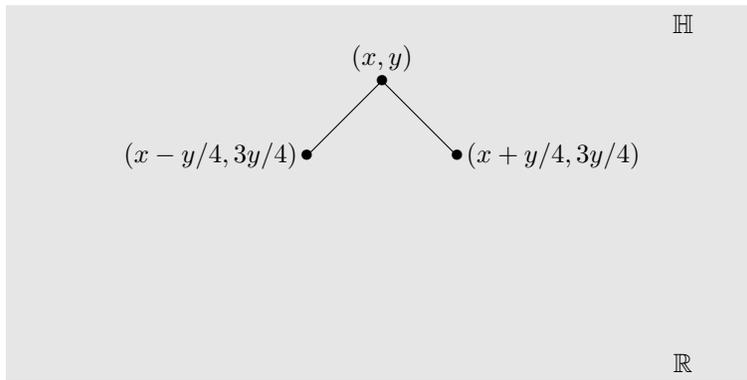

\begin{theorem}\label{theorem:convex_line}
Let $\Phi\colon [0,\infty)\to [0,\infty)$ be an increasing convex function and suppose that $h\colon \R\to \R$ is an increasing homeomorphism. Then for all $q>0$ we have
\begin{align*}
\int_{\UHP} \Phi(q4^{-1}C_0^{-1}\rho_h)  \, d\sigma\leq  \int_{\UHP} \Phi(qC_0^{-1}K_h) \, d\sigma \leq C\int_{\UHP} \Phi(q \rho_h)  \, d\sigma,
\end{align*}
where $C_0$ is the constant from Theorem \ref{theorem:main} and $C>0$ is a uniform constant.
\end{theorem}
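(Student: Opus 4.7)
The plan is to derive both inequalities directly from the pointwise bounds in Theorem \ref{theorem:main}. For the lower bound the argument is immediate: since $\rho_h(x,y)/4\leq K_h(x,y)$ everywhere and $\Phi$ is increasing, we have $\Phi(q4^{-1}C_0^{-1}\rho_h(x,y))\leq \Phi(qC_0^{-1}K_h(x,y))$, and integrating over $\UHP$ gives the first inequality. The entire work is in the upper bound.

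From Theorem \ref{theorem:main}, applying the increasing function $\Phi$ to $qC_0^{-1}K_h(x,y)$ and using $\Phi(\max\{a,b\})=\max\{\Phi(a),\Phi(b)\}\leq \Phi(a)+\Phi(b)$, we obtain
\begin{align*}
\Phi(qC_0^{-1}K_h(x,y)) \leq \Phi(q\rho_h(x,y)) + \Phi\!\left( \tfrac{2q}{y}\int_{-y/4}^{y/4} \rho_h(x+z,y-|z|)\,dz\right).
\end{align*}
The first term integrates to $\int_{\UHP}\Phi(q\rho_h)\,d\sigma$ itself. For the second term, the key observation is that $\frac{2}{y}dz$ is a probability measure on $[-y/4,y/4]$, so Jensen's inequality (applicable because $\Phi$ is convex) yields
\begin{align*}
\Phi\!\left(\tfrac{2q}{y}\int_{-y/4}^{y/4} \rho_h(x+z,y-|z|)\,dz\right) \leq \tfrac{2}{y}\int_{-y/4}^{y/4} \Phi(q\rho_h(x+z,y-|z|))\,dz.
\end{align*}

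It remains to integrate this over $(x,y)\in\UHP$ and recognize the result as a bounded multiple of $\int_\UHP \Phi(q\rho_h)\,d\sigma$. After translating $x\mapsto x+z$ in the inner integral (Lebesgue area is translation invariant), Fubini and the substitution $w=y-|z|$ reduce the integral to
\begin{align*}
\int_0^\infty \Phi(q\rho_h(x,w))\,dx \cdot \int_w^{4w/3} \tfrac{4}{y}\,dy = 4\log(4/3)\int_0^\infty \int_\R \Phi(q\rho_h(x,w))\,dx\,dw,
\end{align*}
after swapping the order of integration in the region $\{(y,w):w\leq y\leq 4w/3\}$. Combining everything, the upper bound holds with any uniform constant $C\geq 1+4\log(4/3)$.

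I do not anticipate a serious obstacle; the main technical step is the Fubini computation and change of variables, which must be handled carefully because the inner integration domain $[-y/4,y/4]$ depends on $y$. A minor subtlety is whether $d\sigma$ here denotes planar Lebesgue measure or spherical measure pulled back to $\UHP$; since the density of the spherical measure with respect to Lebesgue is bounded and positive on $\UHP$, the argument works in either case with the same structure, absorbing the density into the constant $C$ (if $d\sigma$ is truly spherical) or leaving it untouched (if $d\sigma$ is Lebesgue area). The proof does not require any further hypothesis on $\Phi$ beyond increasing and convex.
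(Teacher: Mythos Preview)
Your overall strategy matches the paper's: derive the lower bound directly from $\rho_h/4\leq K_h$, and for the upper bound apply $\Phi$ to the pointwise estimate in Theorem~\ref{theorem:main}, use Jensen on the probability measure $\tfrac{2}{y}\,dz$, and then do a Fubini/change-of-variables computation that produces the factor $\int_w^{4w/3}\tfrac{2}{y}\,dy=2\log(4/3)$ (the paper splits into $z>0$ and $z<0$ and gets $2\log(4/3)$ for each half, matching your $4\log(4/3)$). So the skeleton is correct.

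The gap is in your treatment of the measure. In this paper $d\sigma$ is genuinely the spherical measure, with density $g(x,y)=4/(1+x^2+y^2)^2$ against Lebesgue. Your claim that this density ``is bounded and positive on $\UHP$'' and can therefore be ``absorbed into the constant $C$'' does not work: $g$ is positive but tends to $0$ at infinity, so it is not bounded below, and an inequality proved for Lebesgue integration does not transfer to $\sigma$ for free. Concretely, after your translation $x\mapsto x+z$ the density sits at the \emph{old} point, so you are left with $g(x-z,y)\,f(x,y-|z|)$ and you need the pointwise comparison
\[
g(x-z,\,y)\ \leq\ C\,g(x,\,y-|z|)\qquad\text{for }|z|\leq y/4,
\]
before you can run the Fubini argument against $d\sigma$. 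The paper proves exactly this kind of estimate (their inequality $g(w+u-y,y)\leq 16\,g(w,u)$ for $0<u<y$), via the elementary bound $1+(x')^2+w^2\leq C\bigl(1+(x'-z)^2+y^2\bigr)$ when $|z|\le y/4$ and $3y/4\le w\le y$. Once you insert that comparison, your computation goes through verbatim; without it, the spherical case is not established.
\newline
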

Note that  Theorem \ref{theorem:david_line} follows from Theorem \ref{theorem:convex_line}. We prove Theorem \ref{theorem:main} in Section \ref{section:proof_main} and Theorem \ref{theorem:convex_line} in Section \ref{section:consequences}.

Another class of well-studied generalizations of quasiconformal maps are \textit{$BMO$-quasiconformal} maps. A homeomorphism $H\colon U\to V$ is $BMO$-quasiconformal if $H$ is a mapping of finite distortion and there exists $Q\in BMO(U)$ such that
$$K_H(x,y)\leq Q(x,y)$$
a.e.\ in $U$. In other words, the quasiconformal dilatation of $H$ has a $BMO$ \textit{majorant} (in $U$). This condition is locally slightly stronger than the condition of exponentially integrable distortion. These mappings were studied by Ryazanov, Srebro, and Yakubov \cite{RyazanovSrebroYakubov:BMO}. 

Sastry \cite[Theorem 3.1]{Sastry:boundaryBMO} established a sufficient condition for a homeomorphism $h\colon \R\to \R$ to admit a $BMO$-quasiconformal extension to $\UHP$ and Zakeri \cite[Theorem C]{Zakeri:boundary} proved a stronger result for homeomorphisms of $\R$ that commute with $x\mapsto x+1$. Namely, he proved that if $h$ commutes with $x\mapsto x+1$ and 
$$\rho_h(x,y) \leq \frac{1}{2y}\int_{x-y}^{x+y} A(t)\, dt,$$
where $A\in BMO(\R)$ and $A$ is $1$-periodic, then $h$ has a $BMO$-quasiconformal extension to $\UHP$.  His proof is based on the John--Nirenberg inequality \cite{JohnNirenberg:bmo} and the deep result of Bennett, DeVore, and Sharpley \cite{BennetDeVoreSharpley:bmo}, that the maximal function of a $BMO$ function is either identically equal to infinity or it lies in $BMO$.  {Using the estimate of the main Theorem \ref{theorem:main}, we prove an even stronger result with elementary means.}

For a function $A\in L^1_{\loc}(\UHP)$ and for $z\in \UHP$ we define 
$$\widehat{A}(z) = \frac{1}{|B_z|} \int_{B_z} A,$$
where $B_z$ is the ball $B(z, \im(z)/2)$ and $|\cdot |$ denotes the Lebesgue measure. We also define 
$$\widecheck{A}(z)= \frac{1}{|Q_z|}\int_{Q_z} A= \frac{1}{2y^2} \int_{y/2}^{3y/2}\int_{x-y}^{x+y} A,$$
where $Q_z$ is the $2y \times y$ rectangle, centered at $z=(x,y)$. We prove in Section \ref{section:bmo} the elementary fact that  $\widehat{\cdot}$ and $\widecheck{\cdot}$ are bounded operators from $BMO$ into itself. The combination of this fact with the main Theorem \ref{theorem:main} leads to the following result, proved in Section \ref{section:bmo}. Here, $\|A\|_*$ denotes the $BMO$ semi-norm of $A$.

\begin{theorem}\label{theorem:bmo}
There exists a uniform constant $C>0$ such that the following holds. Let $h\colon \R \to \R$ be an increasing homeomorphism and $A\in BMO(\UHP)$. 
\begin{align*}
&\textrm{If} \quad \rho_h\leq \widehat{A} \textrm{\,\, in $\UHP$} \quad \textrm{then}\quad K_h \leq C\widehat{A} +C\|A\|_* \textrm{\,\, in $\UHP$,}\quad \textrm{and}\\
&\textrm{if} \quad  K_h\leq \widehat{A} \textrm{\,\, in $\UHP$} \quad \textrm{then}\quad \rho_h \leq 4\widehat{A} \textrm{\,\, in $\UHP$}.
\end{align*}
The same conclusions hold with $\widecheck A$ in place of $\widehat{A}$. In particular, under any of these conditions, the Beurling--Ahlfors extension of $h$ to the upper half-plane is $BMO$-quasiconformal. 
\end{theorem}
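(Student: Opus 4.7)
The plan is to reduce Theorem \ref{theorem:bmo} to the pointwise two-sided estimate of Theorem \ref{theorem:main} together with the elementary fact (proved earlier in Section \ref{section:bmo}) that the averaging operators $\widehat{\cdot}$ and $\widecheck{\cdot}$ send $BMO(\UHP)$ into itself. The key observation is that $\widehat{A}$ (and $\widecheck{A}$) is \emph{almost constant} on the two short segments that appear in the upper bound of Theorem \ref{theorem:main}, in the sense that its oscillation is controlled by $\|A\|_*$.

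\textbf{Trivial direction.} If $K_h\leq \widehat{A}$ (or $\widecheck A$) in $\UHP$, then the lower bound $\rho_h(x,y)/4\leq K_h(x,y)$ of Theorem \ref{theorem:main} yields $\rho_h\leq 4\widehat{A}$ (resp.\ $\rho_h\leq 4\widecheck A$) immediately, with no $BMO$ input required.

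\textbf{Main direction.} Suppose $\rho_h\leq \widehat{A}$. The upper bound of Theorem \ref{theorem:main} gives
\begin{align*}
K_h(x,y)\leq C_0 \max\left\{ \widehat{A}(x,y),\; \frac{2}{y}\int_{-y/4}^{y/4} \widehat{A}(x+z,y-|z|)\,dz\right\}.
\end{align*}
I would show that for every $w=(x+z,y-|z|)$ with $|z|\leq y/4$,
\begin{align*}
|\widehat{A}(w)-\widehat{A}(x,y)|\leq C\|A\|_*,
\end{align*}
from which averaging gives the average above $\leq \widehat{A}(x,y)+C\|A\|_*$, and hence $K_h\leq C_0\widehat{A}+C_0 C\|A\|_*$. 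The inequality on $\widehat{A}$ is a routine geometric BMO estimate: the balls $B_{(x,y)}=B((x,y),y/2)$ and $B_w=B(w,(y-|z|)/2)$ both sit inside a ball $\widetilde{B}$ of radius at most a fixed multiple of $y$, and each has radius bounded below by $3y/8$; thus $|\widetilde{B}|$ is comparable to $|B_{(x,y)}|$ and $|B_w|$, and the standard inequalities $|A_{B_{(x,y)}}-A_{\widetilde{B}}|\leq C\|A\|_*$ and $|A_{B_w}-A_{\widetilde{B}}|\leq C\|A\|_*$ yield the claim by the triangle inequality. The $\widecheck A$ case is identical, using the rectangles $Q_w$ and $Q_{(x,y)}$, which are contained in a common rectangle of comparable area.

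\textbf{Conclusion of $BMO$-quasiconformality.} In the first situation, $K_h\leq C(\widehat{A}+\|A\|_*)$, and since $\widehat{\cdot}\colon BMO(\UHP)\to BMO(\UHP)$ is bounded and constants have zero $BMO$ semi-norm, the right-hand side is a $BMO$ majorant of $K_h$; similarly for $\widecheck{A}$. In the second situation, $K_h\leq \widehat{A}\in BMO(\UHP)$ (or $K_h\leq \widecheck A$), so $\widehat{A}$ (resp.\ $\widecheck A$) itself is a $BMO$ majorant. In all cases $K_h$ has a $BMO$ majorant, which is exactly the definition of $BMO$-quasiconformality.

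The only non-bookkeeping point is the $BMO$ oscillation estimate on the segments, and even this is a routine application of the classical fact that the integral averages of a $BMO$ function over two comparable balls differ by at most a constant times $\|A\|_*$; thus I do not anticipate any serious obstacle beyond setting up this geometric comparison carefully.
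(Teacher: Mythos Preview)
Your proposal is correct and follows essentially the same approach as the paper: both reduce to Theorem \ref{theorem:main}, then prove the pointwise oscillation bound $|\widehat{A}(x+z,y-|z|)-\widehat{A}(x,y)|\leq C\|A\|_*$ for $|z|\leq y/4$ by enclosing the two comparable balls in a common ball contained in $\UHP$. The only organizational difference is that the paper handles the $\widecheck{A}$ case by invoking Lemma \ref{lemma:bmo_average_rectangle}(i) to pass back and forth between $\widecheck{A}$ and $\widehat{A}$, whereas you propose to repeat the argument directly for rectangles; both routes work and are equally short.
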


We pose some questions for further study. It is proved in \cite[Proposition 2.5]{LyubichMerenkovMukherjeeNtalampekos:David} that David homeomorphisms of the unit disk are invariant under composition with quasiconformal homeomorphisms of the disk. Since the boundary maps of quasiconformal homeomorphisms of the disk are precisely quasisymmetric homeomorphisms of $S^1$, it follows that the circle homeomorphisms that have a David extension in the disk are invariant under composition with quasisymmetric maps. We pose, therefore, the following question.
\begin{question}
Let $h\colon S^1\to S^1$ be an orientation-preserving homeomorphism such that $e^{\rho_h^c}\in L^q([0,2\pi]\times [0,\pi/2])$ for some $q>0$. Is is true that the pre- and post-compositions of $h$ with quasisymmetric homeomorphisms of $S^1$ also have the same property (with a possibly different $q$)?
\end{question}
If the answer to Question \ref{question:necessary} is positive, then the answer to this question would also be positive.

Another natural problem is to characterize {welding homeomorphisms} of \textit{David circles}, i.e., Jordan curves that arise as the image of the unit circle under a global David homeomorphism of $\widehat{\C}$. A \textit{welding homeomorphism} is a homeomorphism of the circle that arises as the composition of the conformal map from the unit disk onto the interior region of a Jordan curve with a conformal map from the  exterior of this Jordan curve onto the exterior of the unit disk. The existence of a David extension of a circle homeomorphism $h$ inside the disk, as in the conclusion of Theorem \ref{theorem:david_circle}, implies that $h$ is a welding homeomorphism of a David circle. This follows from standard arguments; see, for instance, the discussion in \cite[Section 5]{LyubichMerenkovMukherjeeNtalampekos:David}. 

\begin{question}
What is a characterization of welding homeomorphisms of David circles?
\end{question}
For quasicircles, i.e., images of the unit circle under global quasiconformal maps, the characterization is known. Namely, quasisymmetric maps of the circle are precisely the welding homeomorphisms of quasicircles. However, we do not expect that the answer to the above question is the exponential integrability of $\rho_h^c$. The reason is that the inverse of a welding homeomorphism is a welding homeomorphism trivially. However, the inverse of a David homeomorphism is not necessarily a David map and likewise, we do not expect that the exponential integrability of $\rho_h^c$ is equivalent to the exponential integrability of $\rho_{h^{-1}}^c$.

\bigskip

\section{Proof of the main theorem}\label{section:proof_main}

In this section we prove the main Theorem \ref{theorem:main}. We first recall some basic facts and collect some properties of the Beurling--Ahlfors extension in Section \ref{section:beurling_ahlfors}, and then we give the proof of the theorem in Section \ref{section:proof_sub}. Our proof is self-contained for the convenience of the reader.

\subsection{The Beurling--Ahlfors extension}\label{section:beurling_ahlfors}

We recall the definition of the extension operator of Beurling and Ahlfors \cite{BeurlingAhlfors:extension}. Let $h \colon \R \to \R$ be an increasing homeomorphism. The Beurling--Ahlfors extension $H:\overline{\UHP} \to \overline{\UHP}$  of $h$ is defined by 
$$H(x,y)=u(x,y)+iv(x,y),$$
where
$$u(x,y)=\frac{1}{2y}\int_{x-y}^{x+y} h(t)\,dt \quad \textrm{and}\quad v(x,y)=\frac{1}{2y}\left( \int_{x}^{x+y} h(t)\,dt - \int_{x-y}^{x} h(t)\,dt\right)$$
for $x\in \R$ and $y>0$. Moreover, we define $H|_{\R}=h$. Beurling and Ahlfors proved in \cite[p.~135]{BeurlingAhlfors:extension} that $H:\overline{\UHP} \to \overline{\UHP}$ is a homeomorphism. Indeed, $H$ is proper, continuous, and locally injective and thus it is a covering map from  $\UHP$ onto itself. Since $\UHP$ is simply connected, it follows that $H:\UHP \to \UHP$ is a homeomorphism. This in conjunction with the fact that $H|_{\R}=h$ implies that $H$ is continuous and bijective on $\overline{\UHP}$. But $H^{-1}$ is also continuous on $\overline{\UHP}$ and hence $H:\overline{\UHP} \to \overline{\UHP}$ is a homeomorphism.

By general properties (see e.g.\ \cite[(21.1), p.~587]{AstalaIwaniecMartin:quasiconformal}), the quasiconformal dilatation $K_h$ of $H$ satisfies
\begin{align}\label{dilatation}
K_h+\frac{1}{K_h}=\frac{u_x^2+u_y^2+v_x^2+v_y^2}{u_xv_y-u_yv_x},
\end{align}
where in our case
\begin{align*}
u_x(x,y)&=\frac{1}{2y}\left(h(x+y)-h(x-y)\right),\\
u_y(x,y)&=\frac{1}{2y}\left( h(x+y)+h(x-y)-\frac{1}{y}\int_x^{x+y}h(t)\, dt-\frac{1}{y}\int_{x-y}^x h(t)\,dt\right),\\
v_x(x,y)&=\frac{1}{2y}\left(h(x+y)+h(x-y)-2h(x)\right), \quad \textrm{and}\\
v_y(x,y)&=\frac{1}{2y}\left( h(x+y)-h(x-y)-\frac{1}{y}\int_x^{x+y}h(t)\,dt+\frac{1}{y}\int_{x-y}^x h(t)\,dt\right).
\end{align*}

We list some transformation properties of the Beurling--Ahlfors extension. We let $a>0$ and $b\in \R$. The extension of a function $h$ is denoted by $H$ and the extension of a function $h^*$ is denoted by $H^*$.
\begin{enumerate}[label={(BA\arabic*)}]
	\item\label{p1} If $h^*(t)=ah(t)+b$, then $\rho_{h^*}(x,t)=\rho_h(x,t)$, $H^*(x,y)=aH(x,y)+b$, and $K_{h^*}(x,y)=K_h(x,y)$.
	\item\label{p2} If $h^*(t)=h(at+b)$, then $\rho_{h^*}(x,t)=\rho_h(ax+b,at)$, $H^*(x,y)=H(ax+b,ay)$, and $K_{h^*}(x,y)=K_h(ax+b,ay)$.
	\item\label{p3} If $h^*(t)=-h(-t)$, then $\rho_{h^*}(x,t)=\rho_h(-x,t)$, $H^*(x,y)=-\br{H(-x,y)}$, and $K_{h^*}(x,y)=K_h(-x,y)$; here $\br{H(-x,y)}$ denotes the complex conjugate of $H(-x,y)$.
\end{enumerate}
In all properties, the transformation of the extension $H$ follows immediately from the definition of the Beurling--Ahlfors extension. Moreover, the transformation of the symmetric distortion function $\rho_h$ is also immediate from the definition. The transformation of the quasiconformal dilatation follows only from the transformation of $H$ and does not depend on the properties of the Beurling--Ahlfors extension.

If $h$ is a normalized homeomorphism with $h(0)=0$ and $h(1)=1$, then
\[u_x(0,1)=\frac{1}{2}(1-h(-1)),\quad u_y(0,1)=\frac{1}{2}\left(1+h(-1)-\int_0^1 h(t)\,dt-\int_{-1}^0 h(t)\,dt\right)\]
and
\[v_x(0,1)=\frac{1}{2}(1+h(-1)),\quad v_y(0,1)=\frac{1}{2}\left(1-h(-1)-\int_0^1 h(t)\,dt+\int_{-1}^0 h(t)\,dt\right).\]
If we set 
\[\beta=-h(-1),\quad \xi=1-\int_0^1 h(t)\,dt,\quad \textrm{and}\quad \eta=1+\frac{1}{\beta}\int_{-1}^0 h(t)\,dt,\]
then $u_x(0,1)=\frac{1}{2}(1+\beta)$, $u_y(0,1)=\frac{1}{2}(\xi-\beta\eta)$, $v_x(0,1)=\frac{1}{2}(1-\beta)$, and $v_y(0,1)=\frac{1}{2}(\xi+\beta\eta)$.
Therefore, by \eqref{dilatation} we derive that
\begin{align}\label{dilatationxieta}
K_{h}(0,1)+\frac{1}{K_{h}(0,1)}=\frac{1}{\xi+\eta}\left(\beta(1+\eta^2)+\frac{1}{\beta}(1+\xi^2)\right).
\end{align}
By our normalization on $h$, it is clear that $\xi,\eta\in (0,1)$.

We denote by $F(\xi,\eta)$ the right-hand side of \eqref{dilatationxieta}, where $\beta$ is treated as a constant. Beurling and Ahlfors \cite[pp.~137--138]{BeurlingAhlfors:extension} observed that $F(\xi,\eta)$  is a convex function for $\xi,\eta>0$. Indeed, we set
\[F_1(\xi,\eta)=\frac{1+\eta^2}{\xi+\eta},\,\,F_2(\xi,\eta)=\frac{1+\xi^2}{\xi+\eta},\]
and observe that $F(\xi,\eta)=\beta F_1(\xi,\eta)+(1/\beta) F_2(\xi,\eta)$. Hence, it suffices to see that $F_1,F_2$ are convex. A direct calculation shows that	
\begin{equation}\nonumber 
	\mathrm{Hess} (F_1)= \mathrm{Hess} (F_2) =2
	\begin{bmatrix}
	(1+\eta^2)/(\xi+\eta)^3  & (1-\eta \xi)/(\xi+\eta)^3 \\\\
	(1-\eta \xi)/(\xi+\eta)^3 & (1+\xi^2)/(\xi+\eta)^3
	\end{bmatrix}.
	\end{equation}
The determinant is equal to $4/(\xi+\eta)^4>0$ and since $2(1+\eta^2)/(\xi+\eta)^3>0$ for $\xi,\eta>0$, we conclude that $F_1$ and $F_2$ are convex; see \cite[Theorem 4.5]{Rockafellar:convex}.

\bigskip

\subsection{Proof of Theorem \ref{theorem:main}}\label{section:proof_sub}

Throughout the proof we fix $x \in \R$ and $y>0$. Consider the normalized self-homeomorphism of $\R$ 
\[h^*(t)=\frac{h(x+yt)-h(x)}{h(x+y)-h(x)}\]
with $h^*(0)=0$ and $h^*(1)=1$. By properties \ref{p1} and \ref{p2}, it follows that 
\begin{align}\nonumber
\rho_{h^*}(s,t)=\rho_h (x+ys,yt)\quad \textrm{and}\quad K_{h^*} (s,t)=K_h (x+ys,yt)
\end{align}
for $s\in \R$ and $t>0$. For $s=0$ and $t=1$, by \eqref{dilatationxieta} we have
\begin{align}\label{main:dilatation_xi_eta}
K_{h}(x,y)+\frac{1}{K_{h}(x,y)}=\frac{1}{\xi+\eta}\left(\beta(1+\eta^2)+\frac{1}{\beta}(1+\xi^2)\right),
\end{align}
where 
\begin{align}\label{main:betaxieta}
\beta=-h^*(-1),\quad\xi=1-\int_0^1 h^*(t)\,dt,\quad \textrm{and}\quad\eta=1+\frac{1}{\beta}\int_{-1}^0 h^*(t)\,dt.
\end{align}

We first establish the lower estimate in the statement of the theorem.  Since $0<\xi,\eta<1$ and $K_h\geq 1$, by \eqref{main:dilatation_xi_eta} we have 
$$2K_{h}(x,y)\ge K_{h}(x,y)+\frac{1}{K_{h}(x,y)}\geq \frac{1}{2}\left(\beta+\frac{1}{\beta}\right).$$
Note that $\beta$ is equal to either $\rho_{h^*} (0,1)=\rho_h (x,y)$ or $1/\rho_{h^*} (0,1)=1/\rho_h (x,y)$. Thus, 
$$\frac{1}{2}\left(\beta+\frac{1}{\beta}\right)= \frac{1}{2}\left(\rho_h (x,y)+\frac{1}{\rho_h (x,y)}\right)\ge \frac{{\rho_h (x,y)}}{2}.$$
Therefore,
\begin{align*}
K_{h}(x,y)\ge \frac{{\rho_h (x,y)}}{4}.
\end{align*}
This completes the proof of the lower estimate.

\bigskip

In order to prove the upper estimate for $K_{h}(x,y)$, by \eqref{main:dilatation_xi_eta}, it suffices to prove the required estimate for the function
\[F(\xi,\eta)=\frac{1}{\xi+\eta}\left(\beta(1+\eta^2)+\frac{1}{\beta}(1+\xi^2)\right),\]
which dominates $K_h$. We consider two main cases: $\beta\geq 1$ and $\beta<1$. 

\begin{case}{1}\label{case1}
Suppose that $\beta \ge 1$. In this case, we have $\beta=\rho_h (x,y)$. To simplify the proof we take two further subcases. 
\end{case}

\begin{case}{1(a)}\label{case1a}
Suppose that
\begin{align}\label{main:case1}
h^*(-1/2)\le -\beta /2=h^*(-1)/2.
\end{align}
Essentially, this is the main non-trivial case and we will treat it in full detail. The remaining cases are either trivial or symmetric to this one. We split the proof in several steps for the convenience of the reader.
\end{case}

\noindent
\textit{Step 1:} We will show that 
\begin{align}\nonumber
\int_0^{1/4} h^*(t)\,dt \le \frac{1}{4}+\frac{h^*(-1/2)}{4}\frac{1}{1+A^*},
\end{align}
where 
\begin{align*}
A^*&=4\int_0^{1/4} \rho_{h^*}(t,1-t)\,dt=4\int_0^{1/4} \rho_{h}(x+ty,(1-t)y)\,dt\\
&=\frac{4}{y}\int_0^{y/4} \rho_{h}(x+z,y-z)dz.
\end{align*}

For $0\le t \le 1/4$, we have $2t-1 \le -1/2$, and since $h^*$ is increasing, it follows that $h^*(2t-1)\le h^*(-1/2)$. By this inequality and the definition of  $\rho_{h^*} (t,1-t)$, we infer that 
\begin{align}\nonumber
\frac{h^*(t)-h^*(-1/2)}{h^*(1)-h^*(t)}\le \frac{h^*(t)-h^*(2t-1)}{h^*(1)-h^*(t)} \le\rho_{h^*} (t,1-t)\eqqcolon  \rho^*
\end{align}
for $0\le t \le 1/4$.
Thus, $h^*(t)-h^*(-1/2)\le (1-h^*(t))\rho^*$, which implies that
\begin{align} \nonumber
h^*(t)\le \frac{\rho^*}{1+\rho^*}+\frac{h^*(-1/2)}{1+\rho^*}\le 1+\frac{h^*(-1/2)}{1+\rho^*}. 
\end{align}
Integrating over $t\in [0,1/4]$, we have
\begin{align*}
\int_0^{1/4} h^*(t)\,dt \le \frac{1}{4}+\frac{h^*(-1/2)}{4} \int_0^{1/4}\frac{4}{1+\rho^*}\,dt.
\end{align*}
Finally, by Jensen's inequality, 
\begin{align*}
\int_0^{1/4}\frac{4}{1+\rho^*}\,dt \ge \frac{1}{1+4\int_0^{1/4}\rho^* \,dt}=\frac{1}{1+A^*}.
\end{align*}
Since $h^*(-1/2)<0$, the desired conclusion follows.

\bigskip
\noindent
\textit{Step 2:} We will show that
\begin{align}\label{main:line}
4\xi + \frac{2}{3}\frac{\beta}{1+A^*}\eta \ge \frac{2}{3}\frac{\beta}{1+A^*}.
\end{align}
Geometrically, this inequality says that the point $(\xi,\eta)$ in the plane lies above a certain line with slope $-6(1+A^*)\beta^{-1}$; see Figure \ref{quadri}.

Using the estimate from Step 1, we obtain the estimates
\begin{align}
\begin{aligned}\label{main:4xi}
4\xi&=4\left(1-\int_0^1 h^*(t)\,dt \right)=4-4\left( \int_0^{1/4} h^*(t)\,dt+\int_{1/4}^1 h^*(t)\,dt\right)  \\
&\ge 4-4\left( \frac{1}{4}+\frac{h^*(-1/2)}{4}\frac{1}{1+A^*}+\frac{3}{4}\cdot 1 \right)=-\frac{h^*(-1/2)}{1+A^*}.
\end{aligned}
\end{align}
Moreover, by the main assumption \eqref{main:case1} of Case \ref{case1a}, we have
\begin{align*}
\frac{\beta}{1+A^*}\eta&=\frac{\beta}{1+A^*}\left(1+\frac{1}{\beta}\int_{-1}^0 h^*(t)\,dt \right) \\
&=\frac{\beta}{1+A^*}+\frac{1}{1+A^*}\left(\int_{-1}^{-1/2} h^*(t)\,dt+\int_{-1/2}^0 h^*(t)\,dt\right) \\
&\ge \frac{\beta}{1+A^*}+\frac{1}{1+A^*} \frac{h^*(-1)+h^*(-1/2)}{2}  \\
&\ge \frac{\beta}{1+A^*}+\frac{3}{2} \frac{h^*(-1/2)}{1+A^*}.
\end{align*}
Equivalently, 
$$\frac{2}{3}\frac{\beta}{1+A^*}\eta \geq \frac{2}{3}\frac{\beta}{1+A^*}+ \frac{h^*(-1/2)}{1+A^*}$$
Adding this inequality to \eqref{main:4xi} leads to the claimed inequality.

\bigskip
\noindent
\textit{Step 3:} We finally estimate $F(\xi,\eta)$ from above, under the restrictions $0<\xi,\eta<1$ and under inequality  \eqref{main:line} from Step 2. We will consider two cases, depending on whether or not $\frac{\beta}{6(1+A^*)}$ is less than $1$.

Suppose first that
\[\frac{\beta}{6(1+A^*)}<1.\]
We deduce that the point $(\xi,\eta)$ lies in the convex quadrilateral $Q$ (see Figure \ref{quadri}) bounded by the lines 
\[\xi=1,\quad \eta=0,\quad \eta=1,\quad \textrm{and}\quad  4\xi + \frac{2}{3}\frac{\beta}{1+A^*}\eta =\frac{2}{3}\frac{\beta}{1+A^*}.\]
Since $F$ is convex, its maximum in $Q$ is attained at one of the vertices $(0,1)$, $(1,1)$, $(1,0)$, and $(\beta /{(6(1+A^*))},0)$; see \cite[Corollary 32.3.4]{Rockafellar:convex}. Estimating the values of $F$ at these vertices, we have 
\begin{align*}
F(0,1)&=2\beta+\frac{1}{\beta}\le 2\beta+1 \le 3\beta= 3\rho_h(x,y),\\
F(1,1)&=\beta+\frac{1}{\beta}\le F(0,1),\\
F(1,0)&=\beta+\frac{2}{\beta}\le F(0,1), \quad \textrm{and}\quad \\
F\left(\frac{\beta}{6(1+A^*)},0\right)&=6(1+A^*)\left(1+\frac{1}{\beta^2}\right)+\frac{1}{6(1+A^*)}\\
&\le 12(1+A^*)+\frac{1}{12}\le 25A^*, 
\end{align*}
because $\beta,A^*\ge1$. Thus,
\begin{align}\label{main:maxH}
F(\xi,\eta)\le 25 \max \left\{\rho_h(x,y), A^*\right\}.
\end{align}

\begin{figure}
		\begin{overpic}[width=\linewidth]{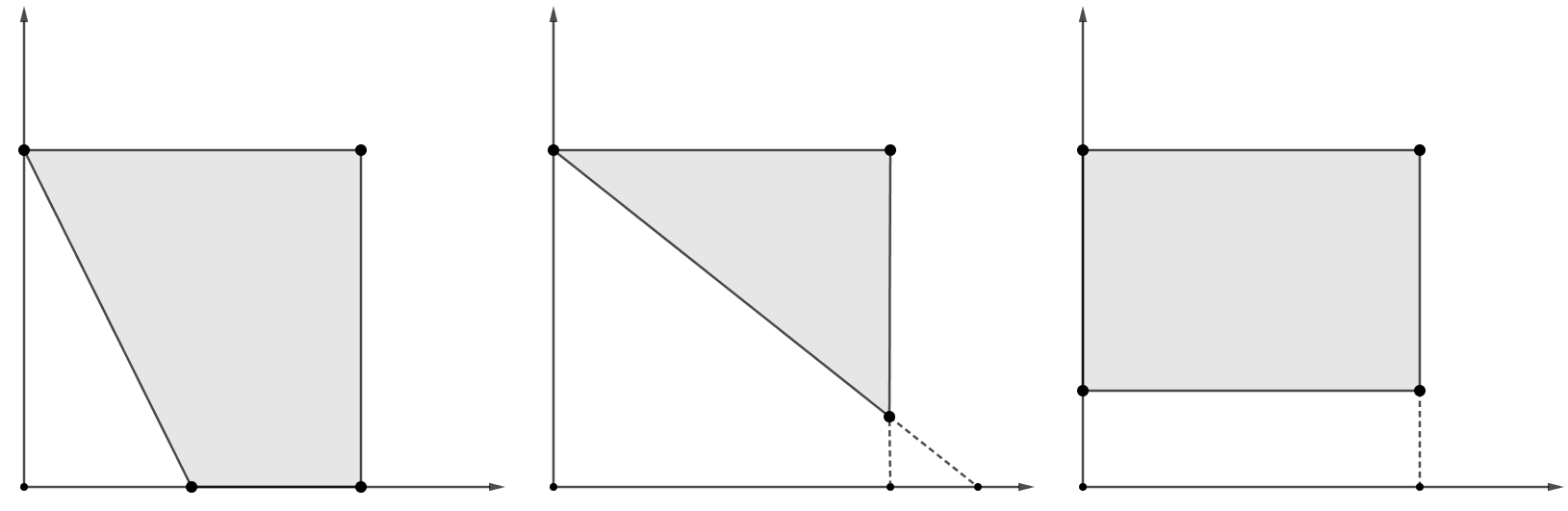}
		\put (11,10) {$Q$}
		\put (1,-1.5) {$0$}
		\put (35,-1.5) {$0$}
		\put (68.7,-1.5) {$0$}
		\put (22.4,-1.5) {$1$}
		\put (56.3,-1.5) {$1$}
		\put (90,-1.5) {$1$}
		\put (9,-2) {$\frac{\beta}{6(1+A^*)}$}
		\put (58,-2) {$\frac{\beta}{6(1+A^*)}$}
		\put (-0.3,22) {$1$}
		\put (33,22) {$1$}
		\put (67,22) {$1$}
		\put (67,6) {$\frac{1}{4}$}
		\put (2.5,30) {$\eta$}
		\put (36.2,30) {$\eta$}
		\put (70,30) {$\eta$}
		\put (30,2.7) {$\xi$}
		\put (64,2.7) {$\xi$}
		\put (98,2.7) {$\xi$}
		\end{overpic}
	\vspace*{0.2mm}
	\caption{Convex polygons containing the point $(\xi,\eta)$ in Case \ref{case1}.}
	\label{quadri}	
\end{figure}

If, instead, \[\frac{\beta}{6(1+A^*)}\ge 1,\]
then $(\xi,\eta)$ lies in the triangle (see Figure \ref{quadri}) bounded by the lines 
\[\xi=1,\quad \eta=1,\quad \textrm{and}\quad  4\xi + \frac{2}{3}\frac{\beta}{1+A^*}\eta =\frac{2}{3}\frac{\beta}{1+A^*},\]
which is contained in the triangle with vertices $(0,1)$, $(1,1)$, and $(1,0)$. The maximum of $F$ in this triangle is attained at one of the vertices. From the previous estimates, $F$ is bounded above by $3\rho_h(x,y)$. Thus, \eqref{main:maxH} also holds in this case.

\bigskip

\begin{case}{1(b)}\label{case1b}
Suppose that $h^*(-1/2)>-\beta /2$.
\end{case}

\noindent 
Then 
\begin{align}
\eta&= 1+ \frac{1}{\beta}\int_{-1}^0 h^*(t)\,dt=1+\frac{1}{\beta}\left(\int_{-1}^{-1/2} h^*(t)\,dt+\int_{-1/2}^{-1} h^*(t)\,dt\right) \nonumber \\
&\ge 1+\frac{1}{2\beta}(h^*(-1)+h^*(-1/2))\ge 1-\frac{\beta+\beta/2}{2\beta}=\frac{1}{4}. \nonumber
\end{align}
Since $0<\xi<1$ and $1/4\le \eta <1$, the point $(\xi, \eta)$ lies in the rectangle (see Figure \ref{quadri}) bounded by the lines
\[\xi=0,\quad \xi=1,\quad  \eta=1/4,\quad \textrm{and}\quad \eta=1.\]
Hence, the function $F$ reaches its maximum at one of the vertices $(0,1/4)$, $(1,1/4)$, $(1,1)$, and $(0,1)$. Since
\begin{align*}
F(0,1/4)&=\frac{17\beta}{4}+\frac{4}{\beta}\le \frac{17\beta}{4}+4\le 9\beta =9\rho_h (x,y),\quad \textrm{and}\\
F(1,1/4)&=\frac{17\beta}{20}+\frac{8}{5\beta}\le3\beta=3\rho_h (x,y),
\end{align*}
the relation \eqref{main:maxH} is still true. 

\bigskip

Thus, in both Cases \ref{case1a} and \ref{case1b}, we derive that 
\[F(\xi,\eta)\le 25 \max \left\{\rho_h(x,y), A^*\right\}.\]
This in conjunction with \eqref{main:dilatation_xi_eta} gives
\begin{align}\label{main:estimatecase1}
K_h(x,y)\le 25 \max \left\{ \rho_h (x,y), \frac{4}{y}\int_0^{y/4} \rho_{h}(x+z,y-z)\,dz \right\}. 
\end{align}

\bigskip

\begin{case}{2}\label{case2}
Suppose that $\beta<1$. This case is symmetric to Case \ref{case1}.
\end{case}

\noindent
In this case, we have $\beta=1/\rho_h(x,y)$. We consider the normalized increasing homeomorphism  ${\widetilde h} (t)=-h^*(-t)/\beta$ with $\widetilde h(0)=0$ and $\widetilde h(1)=1$. By properties \ref{p1} and \ref{p3}, it follows that
\[\rho_{\widetilde h} (s,t)= \rho_{h^*}(-s,t)=\rho_{h}(x-ys,yt)\quad \textrm{and}\quad K_{\widetilde{h}}(s,t)=K_{h^*}(-s,t)=K_{h}(x-ys,yt).\]
So, if in \eqref{main:dilatation_xi_eta} and \eqref{main:betaxieta} we replace  $\beta$, $\xi$, $\eta$, and $h^*$ by $\widetilde \beta$, $\widetilde \xi$, $\widetilde \eta$, and $\widetilde{h}$, respectively, then we have 
\begin{align*}
K_h(x,y)= K_{\widetilde h}(0,1)  \leq \frac{1}{\widetilde \xi+\widetilde \eta}\left(\widetilde \beta(1+\widetilde{\eta\mkern 0mu}^{2})+\frac{1}{\widetilde \beta}(1+\widetilde{\xi\mkern 0mu}^{2})\right) \eqqcolon \widetilde F(\widetilde \xi, \widetilde \eta)
\end{align*}
and 
$$\widetilde \beta=-\widetilde{h} (-1)=\frac{1}{\beta}=\rho_h(x,y)>1.$$
This reduces Case \ref{case2} to Case \ref{case1}. Hence, we obtain the conclusion
$$\widetilde F(\widetilde \xi, \widetilde \eta) \leq 25 \max\{\rho_h(x,y), \widetilde A\},$$
where
\begin{align}
\widetilde{A}&=4\int_0^{1/4} \rho_{\widetilde{h}}(t,1-t)\,dt=4\int_0^{1/4} \rho_{h}(x-ty,(1-t)y)\,dt \nonumber\\
&=\frac{4}{y}\int_{-y/4}^0 \rho_{h}(x+z,y+z)\,dz \nonumber.
\end{align}
We deduce that
\begin{align}\label{main:estimatecase2}
K_h(x,y)\le 25 \max \left\{ \rho_h (x,y), \frac{4}{y}\int_{-y/4}^0 \rho_{h}(x+z,y+z)\,dz \right\}. 
\end{align}

\bigskip

Combining \eqref{main:estimatecase1} from Case \ref{case1} and \eqref{main:estimatecase2} from Case \ref{case2}, we finally have
\[K_h(x,y)\le 50 \max \left\{ \rho_h (x,y), \frac{2}{y}\int_{-y/4}^{y/4} \rho_{h}(x+z,y-|z|)\,dz \right\}.\]
This completes the proof.\qed

\begin{remark}\label{remark:error}
Our proof above follows some ideas from the proof of Theorem 3 in \cite{ChenChenHe:boundary}. However, there is an error in that proof. Namely, the last inequality in (3.10) is incorrect, since $h(2t-1)$ is a negative number for $t\in (0,1/2)$. Since $\rho$ is assumed to be a decreasing function, in order to obtain a correct estimate one would have to replace $\rho(y_0)$ by $\rho(y_0/2)$ in the last displayed formula of (3.10). This alters the conclusion of the theorem, inequality (3.4), to the inequality
$D(x_0+iy_0)\leq 4\rho(y_0/2)+C.$
\end{remark}

\bigskip

\section{Consequences of the main theorem}\label{section:consequences}

In this section we establish the consequences of the main theorem; that is, Theorem \ref{theorem:convex_line}, which provides integrability conditions for homeomorphisms of the real line, Theorem \ref{theorem:convex_circle}, which provides integrability conditions for homeomorphisms of the circle, and Theorem \ref{theorem:bmo}, regarding the $BMO$ majorants.

\subsection{Integrability conditions}

\begin{proof}[Proof of Theorem \ref{theorem:convex_line}]

Let $\Phi\colon [0,\infty)\to [0,\infty)$ be an increasing convex function. The first inequality of Theorem \ref{theorem:main} immediately implies the first inequality of Theorem \ref{theorem:convex_line}.

For the second inequality, we multiply the second inequality of Theorem \ref{theorem:main} with $qC_0^{-1}$, and then apply the function $\Phi$. Using Jensen's inequality we obtain
\begin{align*}
\Phi(qC_0^{-1}K_h(x,y)) \leq  \max \left\{\Phi(q\rho_h(x,y)), \frac{2}{y}\int_{-y/4}^{y/4} \Phi(q\rho_h(x+z,y-|z|))\, dz \right\},
\end{align*}
for all $(x,y)\in \UHP$. In order to obtain the conclusion, it suffices to show that
\begin{align*}
\int_{\R} \int_{0}^{\infty} \frac{2}{y}\int_{-y/4}^{y/4} \Phi(q\rho_h(x+z,y-|z|)) \, dz \, \frac{4dydx}{(1+x^2+y^2)^2} \leq C  \int_{\UHP} \Phi(q\rho_h) \, d\sigma
\end{align*}
for some uniform constant $C>0$. For simplicity, we define $f=\Phi(q\rho_h)$ and let $g(x,y)$ be the spherical density $4/(1+x^2+y^2)^2$. 

We break the inner integral into two integrals, from $0$ to $y/4$ and from $-y/4$ to $0$. It suffices to prove the estimate for the first one, since the proof is essentially the same for the other. By changing coordinates repeatedly and applying Fubini's theorem, we have
\begin{align*}
\int_{\R} \int_{0}^{\infty} \frac{2}{y}\int_0^{y/4} &f(x+z,y-z)g(x,y) \, dzdydx\\
&\quad\quad= \int_{\R} \int_{0}^{\infty} \frac{2}{y}\int_{3y/4}^{y} f(x+y-u,u)g(x,y) \, dudydx\\
&\quad\quad= \int_{0}^{\infty} \frac{2}{y}\int_{3y/4}^{y} \int_{\R}f(x+y-u,u)g(x,y) \, dxdudy\\
&\quad\quad= \int_{0}^{\infty} \frac{2}{y}\int_{3y/4}^{y} \int_{\R}f(w,u)g(w+u-y,y) \, dwdudy.
\end{align*}
Now we claim that $g(w+u-y,y)\leq 16 g(w,u)$ for $0<u<y$ and $w\in \R$. Indeed,
\begin{align*}
1+(w+u-y)^2+y^2&\geq 1+(w-(y-u))^2 + \frac{(y-u)^2+u^2}{2} \\
&\geq 1+\frac{(w-(y-u))^2 +(y-u)^2}{2} +\frac{u^2}{2}\\
&\geq 1+ \frac{w^2}{4}+\frac{u^2}{2} \geq \frac{1+w^2+u^2}{4}.
\end{align*}
The claim follows immediately. Therefore, it suffices to bound the integral of $f(w,u)g(w,u)$, instead of $f(w,u)g(w+u-y,y)$. We have
\begin{align*}
\int_{0}^{\infty} \frac{2}{y}\int_{3y/4}^{y} \int_{\R}f(w,u)g(w,u) \, dwdudy&= \int_{\R}\int_{0}^\infty  f(w,u)g(w,u)\int_{u}^{4u/3} \frac{2}{y}\,dydudw.
\end{align*}
Finally, we observe that 
$$\int_{u}^{4u/3} \frac{2}{y}\,dy= 2\log(4/3)$$
for all $u>0$ and this completes the proof.
\end{proof}

Next, we prove Theorem \ref{theorem:convex_circle}, which is a transportation of Theorem \ref{theorem:convex_line} to homeomorphisms of the circle. Our proof follows from an adaptation of the argument of Zakeri \cite[p.~243]{Zakeri:boundary}.

\begin{proof}[Proof of Theorem \ref{theorem:convex_circle}]
Let $h\colon S^1\to S^1$ be an orientation-preserving homeomorphism of the circle. If $h(1)= e^{i\theta_0}\neq 1$, we consider the homeomorphism $h\cdot e^{-i\theta_0}$. If we extend this homeomorphism to a homeomorphism $H$ of the disk with the desired integrability properties for $K_H$, then $H\cdot e^{i\theta_0}$ will be an extension of $h$ with the desired properties. Therefore, it suffices to prove the theorem assuming that $h(1)=1$.

We lift the homeomorphism $h$ to the real line, under the universal covering map $\psi(z)=e^{2\pi i z}$. We thus obtain an increasing homeomorphism $\widetilde h$ of the real line with $\widetilde h(0)=0$, $\widetilde h(1)=1$, and $\widetilde h(x+1)=\widetilde h(x)+1$ for all $x\in \R$. Consider the Beurling--Ahlfors extension $\widetilde H$ of $\widetilde h$ in the upper half-plane. By properties \ref{p1} and \ref{p2}, we have $\widetilde H(z+1)=\widetilde H(z)+1$ for all $z\in \UHP$. It follows that $\widetilde H$ descends to a homeomorphism $H$ of the unit disk that extends $h$.

Since the circular symmetric distortion $\rho_h^c$ is defined using arclength, we have $\rho_h^c(\theta,t) =\rho_{\widetilde h}(\theta/2\pi, t/2\pi)$ for all $\theta\in [0,2\pi]$ and $t\in (0,\pi/2]$. The assumption that $\Phi(q \rho_h^c)\in L^1([0,2\pi]\times(0,\pi/2])$ now implies that $\Phi(q\rho_{\widetilde h}) \in L^1([0,1]\times (0,1/4])$. The continuity of $\rho_{\widetilde h}$ on $[0,1]\times [1/4,1]$ implies that $\Phi(q\rho_{\widetilde h}) \in L^1([0,1]\times (0,1])$. Since $\widetilde h$ commutes with $x\mapsto x+1$, we have that $\rho_{\widetilde h}$ is bounded on $[0,1]\times [1,\infty)$. Therefore, $\Phi(q\rho_{\widetilde h})  \in L^1 ([0,1]\times (0,\infty); d\sigma)$. Again, since $\widetilde h$ commutes with $x\mapsto x+1$, we conclude that $\Phi(q\rho_{\widetilde h})  \in L^1( \UHP; d\sigma)$. 

By Theorem \ref{theorem:convex_line}, the Beurling--Ahlfors extension $\widetilde H$ of $\widetilde h$ satisfies $\Phi(qC_0^{-1}K_{\widetilde H})  \in L^1( \UHP; d\sigma)$. Since $\psi \circ \widetilde H= H\circ \psi$ and $\psi$ is locally conformal, we have $K_{\widetilde H}= K_H \circ \psi$. By changing coordinates under the conformal map $\psi|_{(0,1)\times (0,\infty)}$ we obtain
\begin{align}\label{equality:jacobian}
\int_{\D \setminus [0,1)\times \{0\}} \Phi(qC_0^{-1}K_{H}) J_{\psi^{-1}}^{\sigma}\, d\sigma =\int_{(0,1)\times (0,\infty)}  \Phi(qC_0^{-1}K_{\widetilde H}) \, d\sigma<\infty,
\end{align}
where $J_{\psi^{-1}}^{\sigma}$ denotes the spherical Jacobian  of $\psi^{-1}(z)= \frac{\log(z)}{2\pi i}$, with a branch cut along the non-negative real axis. We have
\begin{align*}
J_{\psi^{-1}}^{\sigma}(z)= J_{\psi^{-1}}(z) \frac{(1+|z|^2)^2}{(1+|\psi^{-1}(z)|^2)^2}\simeq \frac{1}{|z|^2}\frac{(1+|z|^2)^2}{(1+\log^2|z|)^2} \gtrsim 1
\end{align*}
for $z\in \D\setminus[0,1)\times \{0\}$. Since $d\sigma\simeq dxdy$ for $(x,y)\in \D$, by \eqref{equality:jacobian} we have
\begin{align*}
\int_{\D} \Phi(qC_0^{-1}K_{H})\, dxdy <\infty.
\end{align*}
This completes the proof.
\end{proof}

\bigskip

\subsection{Functions of bounded mean oscillation}\label{section:bmo}
We first recall the definition of a function of bounded mean oscillation. Let $U\subset \R^n$, $n\geq 1$, be an open set and $A\in L^1_{\loc}(U)$. The function $A$ lies in $BMO(U)$ if 
\begin{align*}
\|A\|_*\coloneqq \sup_{B\subset U} \frac{1}{|B|} \int_B |A-A_B| <\infty,
\end{align*}
where $A_B= \frac{1}{|B|}\int_B A$,  and the supremum is taken over all closed balls $B\subset U$.

For $A\in L^1_{\loc}(U)$, we define
$$\widehat{A}(z) = \frac{1}{|B_z|} \int_{B_z} A,$$
where $B_z=B(z,\dist(z,\partial U)/2)$. 
\begin{lemma}\label{lemma:bmo_average}
There exists a uniform constant $C=C(n)>0$ such that if $A\in BMO(U)$, then $\widehat{A}\in BMO(U)$ and $\|\widehat A\|_*\leq C\|A\|_*$.
\end{lemma}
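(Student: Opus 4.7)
The plan is to verify the BMO estimate directly: for every closed ball $B = B(z_0, r) \subset U$, I want to find a constant $c_B$ with $\frac{1}{|B|} \int_B |\widehat A - c_B|\,dz \leq C(n) \|A\|_*$, which implies $\|\widehat A\|_* \leq 2C(n)\|A\|_*$. Setting $d_0 = \dist(z_0, \partial U) \geq r$, I would split into two regimes according to whether $B$ sits comfortably in the interior ($r \leq d_0/4$) or is close to the boundary ($d_0/4 < r \leq d_0$).

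In the interior regime, I would take the reference ball $B^* = B(z_0, d_0) \subset U$ and set $c_B = A_{B^*}$. The triangle inequality gives $d(z) \in [3d_0/4, 5d_0/4]$ for every $z \in B$, from which it follows that $B_z = B(z, d(z)/2) \subset B^*$ and $|B^*|/|B_z|$ is bounded by a constant depending only on $n$. The elementary inclusion inequality $|A_{B_1} - A_{B_2}| \leq (|B_2|/|B_1|)\|A\|_*$ for nested balls $B_1 \subset B_2 \subset U$ then yields a uniform pointwise bound $|\widehat A(z) - A_{B^*}| \leq C(n)\|A\|_*$ on all of $B$, which is stronger than what the BMO inequality requires.

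In the boundary regime $r \sim d_0$, the radii of the balls $B_z$ vary widely as $z$ traverses $B$, so no uniform pointwise bound is available. I would instead compare $\widehat A(z)$ to $\widehat A(z_0)$ via a chain of points $z_0 = w_0, w_1, \ldots, w_k = z$ in $U$ chosen so that consecutive balls $B_{w_i}$ and $B_{w_{i+1}}$ have comparable radii and substantial overlap; each link then contributes an error of at most $C(n)\|A\|_*$ by the standard BMO comparison, and a geometric interpolation between $z_0$ and $z$ produces a chain of length $k = O(1 + \log^+(d_0/d(z)))$. Integrating over $B$ reduces the estimate to the bound
\[
\frac{1}{|B|} \int_B \bigl(1 + \log^+(d_0/d(z))\bigr)\,dz \leq C(n).
\]

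The main obstacle is this last integral. For $U = \UHP$, the only case used in the present paper, it is elementary: $d(z)$ equals the height $\im(z)$, and an explicit computation in polar coordinates centered at $z_0$ shows that the integral is bounded by a dimensional constant, even in the extremal configuration $r = y_0$ where $B$ touches $\partial\UHP$ and the logarithmic singularity is barely integrable. The companion operator $\widecheck{\cdot}$ is handled by the same strategy with balls replaced by rectangles, giving the full conclusion of the lemma.
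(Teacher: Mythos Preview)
Your approach is essentially the paper's: compare $\widehat A(z)$ to $\widehat A(z_0)$ via a chain of intermediate points whose associated balls have comparable radii and overlap, bound each link by $C(n)\|A\|_*$, and then integrate the chain length over the ball. The paper does not split into cases; it runs the chain argument for every closed ball $B_0=B(z_0,r)\subset U$, placing the chain points on the segment $[z_0,z]$ with step sizes $2^{-i}r$ and obtaining the bound $N\leq 1+(\log 2)^{-1}\log\bigl(1/(1-|z-z_0|/r)\bigr)$. The resulting integral $\int_{B_0}\log\bigl(1/(1-|z-z_0|/r)\bigr)\,dz$ is $O(|B_0|)$ by polar coordinates, for \emph{any} domain $U$.

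This exposes the one real gap in your proposal: you verify the bound
\[
\frac{1}{|B|}\int_B\log^+\!\bigl(d_0/d(z)\bigr)\,dz\leq C(n)
\]
only for $U=\UHP$, labelling the general case ``the main obstacle,'' whereas the lemma is stated for arbitrary open $U\subset\R^n$. The obstacle disappears once you use the triangle inequality $d(z)\geq d_0-|z-z_0|$, which gives $\log^+(d_0/d(z))\leq \log\bigl(d_0/(d_0-|z-z_0|)\bigr)$; integrating in polar coordinates yields $O(d_0^n)=O(|B|)$ in your boundary regime $r>d_0/4$, exactly as in the paper. With this one-line observation your restriction to $\UHP$ becomes unnecessary and the two proofs coincide. (Your interior case $r\leq d_0/4$ is a harmless shortcut; it corresponds to the paper's chain with $N=1$.) The paper treats $\widecheck{\cdot}$ not by repeating the argument with rectangles, but by the simpler route of bounding $|\widecheck A-\widehat A|\leq C\|A\|_*$ pointwise.
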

\begin{proof}
Suppose that $A\in BMO(U)$ and consider a ball $B_0=B(z_0,r)\subset \br{B(z_0,r)}\subset U$. We will show that there exists a uniform constant $C=C(n)>0$ and  a constant $c_0\in \R$ depending on $B_0$ such that
\begin{align}\label{lemma:bmo_equivalent}
\int_{B_0} |\widehat{A}- c_0| \leq C\|A\|_*|B_0|.
\end{align}
This will imply that $\widehat{A}\in BMO(U)$ and $\|\widehat{A}\|_{*}\leq 2C\|A\|_*$; see \cite[Lemma 14.49, p.~445]{WheedenZygmund:real}.

Let $z\in B_0$ and consider a chain of points $z_0,z_1,\dots,z_N=z$ lying on the segment between $z_0$ and $z$ such that $|z_i-z_{i-1}|=2^{-i}r$ for $i\in \{1,\dots,N-1\}$ and $|z_N-z_{N-1}|\leq 2^{-N}r$. We fix $i\in \{1,\dots,N\}$. Recall that $B_{z_i}=B(z_i,d_i/2)$, where $d_i=\dist(z_i,\partial U)$. If $d_i\leq d_{i-1}$, then we define $$B(w_i,R_i)= B(z_{i-1}, d_{i-1}/2 +|z_{i}-z_{i-1}|),$$ while if $d_{i-1}<d_i$, then we define $$B(w_i,R_i)=B(z_{i}, d_{i}/2 +|z_{i}-z_{i-1}|).$$
In both cases we have
$$B_{z_i}\cup B_{z_{i-1}}\subset B(w_i,R_i).$$
We observe that 
$$d_{i-1}\geq d_0-|z_{i-1}-z_0|>r- r(2^{-1}+\dots+ 2^{-i+1})=2^{-i+1}r\geq  2|z_i-z_{i-1}|.$$
It follows that 
\begin{align}\label{lemma:bmo_Ri_def}
R_i=\max\{d_{i-1},d_i\}/2 +|z_{i}-z_{i-1}|<\max\{d_{i-1},d_i\}
\end{align}
and thus $B(w_i,R_i)\subset \br{B(w_i,R_i)}\subset U$.  Since $|d_i-d_{i-1}|\leq |z_i-z_{i-1}|<d_{i-1}/2$, we have $d_{i-1}< 2d_i$ and $d_{i}<3d_{i-1}/2$. Therefore, $\max\{d_{i-1},d_i\}<2\min\{d_{i-1},d_i\}$. This, in conjunction with \eqref{lemma:bmo_Ri_def}, gives
\begin{align}\label{lemma:bmo_Ri}
R_i/2 <\min\{d_i,d_{i-1}\} \leq \max\{d_i,d_{i-1}\} < 2R_i.
\end{align}

Since $B_{z_i}\subset B(w_i,R_i)$ and these balls have comparable radii, we have
\begin{align*}
|A_{B_{z_{i}}}-A_{B(w_i,R_i)}| &\leq \frac{1}{|B_{z_{i}}|} \int_{B_{z_{i}}} |A-A_{B(w_i,R_i)}| \\
&\leq \frac{C}{|B(w_i,R_i)|}\int_{B(w_i,R_i)} |A-A_{B(w_i,R_i)}|,
\end{align*}
for a uniform constant $C=C(n)>0$. The fact that the closure of $B(w_i,R_i)$ is contained in  $U$ and the assumption that $A\in BMO(U)$ imply that the latter average is bounded by $\|A\|_*$. Hence,
\begin{align*}
|A_{B_{z_{i}}}-A_{B(w_i,R_i)}|\leq C \|A\|_{*}\quad \textrm{and similarly}\quad|A_{B_{z_{i-1}}}- A_{B(w_i,R_i)}|\leq C\|A\|_*
\end{align*}
for a uniform constant $C=C(n)>0$. Therefore, we have
\begin{align*}
|\widehat{A}(z)-\widehat{A}(z_0)|&\leq \sum_{i=1}^N |\widehat{A}(z_i)-\widehat{A}(z_{i-1})| \\
&\leq \sum_{i=1}^N(|A_{B_{z_i}}- A_{B(w_i,R_i)}| + |A_{B_{z_{i-1}}}-A_{B(w_i,R_i)}|)\\
&\leq 2CN\|A\|_*.
\end{align*}
Note that $|z-z_0|\geq \sum_{i=1}^{N-1} |z_{i}-z_{i-1}| = r\sum_{i=1}^{N-1}2^{-i}= r(1-2^{-N+1})$. Therefore, 
$$N\leq 1+(\log2)^{-1} \log\left(\frac{1}{1-|z-z_0|/r} \right).$$ 
Finally, by integrating over $B_0$ we have
\begin{align*}
\int_{B_0} |\widehat{A}(z)-\widehat{A}(z_0)| \, dz \leq C'|B_0| \|A\|_* + C' \|A\|_* \int_{B_0} \log\left(\frac{1}{1-|z-z_0|/r} \right) \, dz,
\end{align*}
where $C'=C'(n)>0$ is a uniform constant. By integrating in polar coordinates, we see that the latter integral is bounded by $C''|B_0|$ for a constant $C''=C''(n)>0$. Hence, we have proved \eqref{lemma:bmo_equivalent} with $c_0=\widehat{A}(z_0)$.
\end{proof}

Recall that for $A\in L^1_{\loc}(\UHP)$ we have defined
$$\widecheck{A}(z)= \frac{1}{|Q_z|}\int_{Q_z} A= \frac{1}{2y^2} \int_{y/2}^{3y/2}\int_{x-y}^{x+y} A,$$
where $Q_z$ is the $2y \times y$ open rectangle, centered at $z=(x,y)$. 

\begin{lemma}\label{lemma:bmo_average_rectangle}
There exists a uniform constant $C>0$ such that if $A\in BMO(\UHP)$, then 
\begin{enumerate}[\upshape(i)]
\item $|\widecheck A-\widehat A|\leq C\|A\|_*$ in $\UHP$, and
\item $\widecheck{A}\in BMO(\UHP)$ with $\|\widecheck A\|_*\leq C\|A\|_*$.
\end{enumerate}
\end{lemma}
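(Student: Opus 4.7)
The plan is to derive (ii) from (i) together with Lemma~\ref{lemma:bmo_average}, so the real work lies in proving (i). For (i), I fix $z=(x,y)\in\UHP$ and look for a single ball $B^*$ whose closure lies in $\UHP$ and which contains both $B_z$ and $Q_z$ while remaining comparable in area to each of them. Centering such a ball at $z$ cannot work, since $B_z$ already reaches down to height $y/2$ while any ball through the top corners of $Q_z$ would cross the real line; so I will shift the center upward. The candidate to verify is $B^*=B((x,2y),\sqrt{13}\,y/2)$: its bottom sits at height $(4-\sqrt{13})y/2>0$, so $\overline{B^*}\subset\UHP$; for $p\in B_z$ the triangle inequality gives $|p-(x,2y)|<y/2+y<\sqrt{13}\,y/2$, whence $B_z\subset B^*$; and for $(x',y')\in Q_z$ one has $(x'-x)^2+(y'-2y)^2<y^2+(3y/2)^2=13y^2/4$, whence $Q_z\subset B^*$. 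Both ratios $|B^*|/|B_z|$ and $|B^*|/|Q_z|$ are absolute constants.

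Given such a $B^*$, I will apply the standard $BMO$ estimate that for any measurable $E\subset B$ with $\overline{B}\subset\UHP$,
\[
|A_E-A_B|\le\frac{1}{|E|}\int_E|A-A_B|\le\frac{|B|}{|E|}\,\|A\|_*.
\]
Using this with $(E,B)=(B_z,B^*)$ and $(E,B)=(Q_z,B^*)$ and combining via the triangle inequality yields
\[
|\widehat{A}(z)-\widecheck{A}(z)|=|A_{B_z}-A_{Q_z}|\le|A_{B_z}-A_{B^*}|+|A_{B^*}-A_{Q_z}|\le C\|A\|_*
\]
for a uniform constant $C$, which is (i).

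For (ii), I will set $f\coloneqq\widecheck{A}-\widehat{A}$; by (i), $\|f\|_{L^\infty(\UHP)}\le C\|A\|_*$, and any bounded function lies in $BMO$ with $\|f\|_*\le 2\|f\|_\infty$. Combined with Lemma~\ref{lemma:bmo_average}, which gives $\widehat{A}\in BMO(\UHP)$ and $\|\widehat{A}\|_*\le C_1\|A\|_*$, the $BMO$ triangle inequality yields $\widecheck{A}\in BMO(\UHP)$ with $\|\widecheck{A}\|_*\le\|\widehat{A}\|_*+\|f\|_*\le C'\|A\|_*$.

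The only nontrivial step is the geometric choice in (i): because $B_z$ touches the real line to within $y/2$, there is no ball of bounded eccentricity centered at $z$ whose closure encircles both $B_z$ and $Q_z$ while staying in $\UHP$. Shifting the center upward to $(x,2y)$ is essentially what is needed, and the radius $\sqrt{13}\,y/2$ is dictated by the distance from this point to the bottom corners of $Q_z$. Once $B^*$ is chosen, the remainder of both~(i) and~(ii) reduces to routine applications of the $BMO$ triangle inequality.
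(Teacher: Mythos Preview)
Your proof is correct and matches the paper's approach essentially line by line: the paper also chooses the auxiliary ball centered at $(x,2y)$ with radius $\sqrt{13}\,y/2$ for part~(i), and derives~(ii) from~(i) together with Lemma~\ref{lemma:bmo_average} via the same triangle-inequality splitting (your phrasing through $f=\widecheck A-\widehat A$ and $\|f\|_*\le 2\|f\|_\infty$ is a slightly cleaner packaging of the identical estimate).
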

Of course, the particular choice of the dimensions of the rectangle $Q_z$ is not of importance, as long as the lengths of the sides of $Q_z$ are comparable to the distance of $Q_z$ to the boundary of $\UHP$.

\begin{proof}
For any $z=(x,y)\in \UHP$ there exists a ball $B_0\subset \br {B_0}\subset \UHP$ with radius comparable to $y$, such that $B_0\supset B_z=B(z,y/2)$ and $B_0\supset Q_z$. For example, one can take the center to be $(x,2y)$ and the radius to be $y\sqrt{13}/2$.  We now have
\begin{align*}
|\widecheck{A}(z) -\widehat{A}(z)| &\leq |\widecheck{A}(z)-A_{B_0}| +|A_{B_0}-\widehat A(z)| \\
&\leq \frac{1}{|Q_z|} \int_{Q_z} |A-A_{B_0}| + \frac{1}{|B_z|} \int_{B_z} |A-A_{B_0}|\\
&\leq \frac{C'}{|B_0|} \int_{B_0} |A-A_{B_0}| \leq C'\|A\|_*
\end{align*}
for a uniform constant $C'>0$, since $A\in BMO(\UHP)$. 

Upon integration, it follows that $|(\widecheck A)_B- (\widehat A)_B| \leq C'\|A\|_*$ for any ball $B\subset \br B\subset \UHP$. Therefore, by the above and Lemma \ref{lemma:bmo_average} we have
\begin{align*}
\frac{1}{|B|}\int_B |\widecheck A- (\widecheck A)_B| &\leq \frac{1}{|B|}\int_B |\widecheck A- \widehat{A}|+\frac{1}{|B|}\int_B |\widehat A- (\widehat A)_B|+\frac{1}{|B|}\int_B |(\widehat A)_B- (\widecheck A)_B|\\
&\leq C'\|A\|_* +C\|A\|_* +C'\|A\|_* =(2C'+C)\|A\|_*,
\end{align*}
where $C$ is the constant from Lemma \ref{lemma:bmo_average}. This completes the proof.
\end{proof}

\begin{proof}[Proof of Theorem \ref{theorem:bmo}]
If $K_h\leq \widehat{A}$, then $\rho_h\leq 4\widehat A$ by the first inequality of Theorem \ref{theorem:main}. The same claim holds with $\widecheck A$ in place of $\widehat A$.

Conversely, suppose that $\rho_h\leq \widehat{A}$. By the second inequality of Theorem \ref{theorem:main}, we have
$$K_h(x,y)\leq C_0\widehat{A}(x,y)+ C_0 \frac{2}{y}\int_{-y/4}^{y/4} \widehat{A}(x+z,y-|z|)\, dz, $$
Hence, it suffices to show that 
$$\frac{2}{y}\int_{-y/4}^{y/4} \widehat{A}(x+z,y-|z|)\, dz \leq \widehat{A}(x,y)+C\|A\|_*$$
for a uniform constant $C>0$. We fix $z\in (-y/4,y/4)$ and consider the ball $B=B((x,y),R)$, where $R=y\sqrt{2}/4+ y/2<y$, so that 
$$\br B\subset \UHP,\quad B((x,y),y/2) \subset B, \quad\textrm{and}\quad B( (x+z,y-|z|),(y-|z|)/2)\subset B.$$ We now estimate
\begin{align*}
|\widehat{A}(x+z,y-|z|)-\widehat{A}(x,y)| &\leq |\widehat{A}(x+z,y-|z|)-A_B|+|A_B-\widehat{A}(x,y)|\\
&\leq \frac{C}{|B|} \int_B |A-A_B|\leq C\|A\|_*
\end{align*}
for a uniform constant $C>0$. Therefore,
\begin{align*}
\widehat{A}(x+z,y-|z|) \leq \widehat{A}(x,y) + C\|A\|_*
\end{align*}
for all $z\in (-y/4,y/4)$. Upon integration, this completes the proof in this case.

If we have $\rho_h\leq \widecheck A$, then by Lemma \ref{lemma:bmo_average_rectangle} (i) we have 
$$\rho_h\leq \widehat{A}+C'\|A\|_* = \widehat{(A+C'\|A\|_*)}$$
for a uniform constant $C'>0$. By the previous case, we have
$$K_h \leq C'' ( \widehat{(A+C'\|A\|_*)}  +  \|A+C'\|A\|_* \|_* ) = C'' \widehat A + C''(C'+1)\|A\|_*$$
for some uniform constant $C''>0$. Applying again Lemma \ref{lemma:bmo_average_rectangle} (i) and switching back to $\widecheck A$ leads to the desired conclusion.
\end{proof}

\bigskip

\bibliography{biblio}
\end{document}